  \newcommand\@dotsep{4.5}
  \def\@tocline#1#2#3#4#5#6#7{\relax
     \ifnum #1>\c@tocdepth 
     \else
     \par \addpenalty\@secpenalty\addvspace{#2}%
     \begingroup \hyphenpenalty\@M
     \@ifempty{#4}{%
     \@tempdima\csname r@tocindent\number#1\endcsname\relax
        }{%
         \@tempdima#4\relax
           }%
      \parindent\z@ \leftskip#3\relax \advance\leftskip\@tempdima\relax
      \rightskip\@pnumwidth plus1em \parfillskip-\@pnumwidth
       #5\leavevmode\hskip-\@tempdima #6\relax
       \leaders\hbox{$\m@th
       \mkern \@dotsep mu\hbox{.}\mkern \@dotsep mu$}\hfill
       \hbox to\@pnumwidth{\@tocpagenum{#7}}\par
       \nobreak
        \endgroup
         \fi}
\DeclareMathAlphabet\mathbfcal{OMS}{cmsy}{b}{n}
\newtheorem{theorem}{Theorem}
\newtheorem{lemma}[theorem]{Lemma}
\newtheorem{notation}[theorem]{Notation}
\newtheorem{proposition}[theorem]{Proposition}
\newtheorem{remark}[theorem]{Remark}
\newtheorem{conjecture}[theorem]{Conjecture}
\theoremstyle{definition}
\newtheorem{definition}[theorem]{Definition}
\newtheorem{example}[theorem]{Example}
\numberwithin{theorem}{section}
\numberwithin{equation}{section}
\newcommand{\CC}{{\mathbb C}}
\newcommand{\RR}{{\mathbb R}}
\newcommand{\TT}{{\mathbb T}}
\newcommand{\NN}{{\mathbb N}}
\newcommand{\Ac}{{\mathcal A}}
\newcommand{\Cc}{{ C}}
\newcommand{\Gc}{{\mathcal G}}
\newcommand{\Ic}{{\mathcal I}}
\newcommand{\Jc}{{\mathcal J}}
\newcommand{\Oc}{{\mathcal O}}
\newcommand{\Qc}{{\mathcal Q}}
\newcommand{\Tc}{{\mathcal T}}
\newcommand{\Vc}{{\mathcal V}}
\newcommand{\Xc}{{\mathcal X}}
\newcommand{\Wc}{{\mathcal W}}
\newcommand{\Pg}{{\mathfrak P}}
\newcommand{\Sg}{{\mathfrak S}}
\newcommand{\ag}{{\mathfrak a}}
\renewcommand{\gg}{{\mathfrak g}}
\newcommand{\hg}{{\mathfrak h}}
\newcommand{\pg}{{\mathfrak p}}
\newcommand{\Ad}{{\rm Ad}}
\newcommand{\Aut}{{\rm Aut}\,}
\newcommand{\ad}{{\rm ad}}
\newcommand{\Comm}{{\rm Comm}}
\newcommand{\de}{{\rm d}}
\newcommand{\ee}{{\rm e}}
\newcommand{\End}{{\rm End}\,}
\newcommand{\ie}{{\rm i}}
\newcommand{\Ind}{{\rm Ind}}
\newcommand{\ind}{{\rm ind}\,}
\newcommand{\Ker}{{\rm Ker}\,}
\newcommand{\LC}{{\rm LC}}
\newcommand{\Ran}{{\rm Ran}\,}
\newcommand{\Tr}{{\rm Tr}\,}
\newcommand{\Gr}{{\rm Gr}}
\newcommand{\alg}{{\rm alg}}
\newcommand{\Subquot}{{\rm SQ}}
\newcommand{\Hausd}{{\rm H}}
\newcommand{\dual}[2]{\langle #1, #2\rangle}
\newcommand{\Sbd}{{\mathfrak S}}
	\newcommand{\tto}{\rightrightarrows}
\begin{document}

\title[$C^*$-algebras of linear dynamical systems]
{On the $C^*$-algebras of linear dynamical systems}
\author{Ingrid Belti\c t\u a}  
\author{Daniel Belti\c t\u a}
\address{Institute of Mathematics ``Simion Stoilow'' 
of the Romanian Academy,   
P.O. Box 1-764, Bucharest, Romania}
\email{ingrid.beltita@gmail.com, Ingrid.Beltita@imar.ro}

\address{Institute of Mathematics ``Simion Stoilow'' 
of the Romanian Academy,   
P.O. Box 1-764, Bucharest, Romania}
\email{beltita@gmail.com, Daniel.Beltita@imar.ro}

\keywords{groupoid; nilpotent Lie group; subquotient}
\subjclass[2010]{Primary 22A22; Secondary 22E27, 22E25}
\date{\today . File name: \jobname.tex}
\thanks{This work was supported by a grant of the Ministry of Research, Innovation and Digitization, CNCS--UEFISCDI, project number PN-IV-P1-PCE-2023-0264, within PNCDI IV}

\begin{abstract}
We verify the conjecture on continuous-trace subquotients for  $C^*$-algebras of nilpotent linear dynamical systems, where by linear dynamical system we mean a continuous action of the additive group of real numbers  by linear maps on a finite-dimensional real vector space. 
In addition, we show that the dimension of the ambient vector space can be recovered from the corresponding $C^*$-algebra and, if the action is 
nilpotent of degree two, the corresponding group is $C^*$-rigid within the class of 1-connected nilpotent Lie groups with coadjoint orbits of dimension $\le 2$. 
\end{abstract}

\maketitle


\section{Introduction}

In this note, by linear dynamical system we mean a continuous action of the additive group of real numbers $\RR$ by linear maps on a finite-dimensional real vector space $\Vc$. 
These actions can be given via linear operators on $\Vc$, 
namely an operator $D\in\End_\RR(\Vc)$ corresponds to the group action 
$\alpha^D\colon \RR\times\Vc\to\Vc$, $(t,v)\mapsto\ee^{tD}v$. 
A~structure of that type gives rise to  the semidirect product group $G_D:=\Vc\rtimes_{\alpha^D}\RR$. 
This is a solvable Lie group 
with its corresponding $C^*$-algebra $C^*(G_D)$. 
As shown in~\cite{BB18a}, several spectral properties of the operator $D$ are reflected in approximation properties of $C^*(G_D)$, which suggests the problem of recovering the data $(\Vc,D)$ from the $C^*$-algebra $C^*(G_D)$. 

That problem requires the study of the fine structure of $C^*(G_D)$ encoded by its subquotients. 
To this end, one needs an alternative description of that $C^*$-algebra in a framework that is more general than the group $C^*$-algebras. 
For, the subquotients of $C^*(G_D)$ fail to be group $C^*$-algebras in general, while many of them are $C^*$-algebras associated to transformation groups defined by the actions of~$\RR$ on suitable subsets of the dual vector space $\Vc^*$. 
We are thus naturally lead to working within the framework of groupoid $C^*$-algebras. 
Specifically, one has the transformation-group Lie groupoid $\Gc_D:=\RR\times\Vc^*\tto\Vc^*$, 
which in turn gives rise to the groupoid $C^*$-algebra $C^*(\Gc_D)$ that is 
$*$-isomorphic to the group $C^*$-algebra $C^*(G_D)$, 
cf. \cite{Re80}, \cite{Wi19}, and also \cite{BB18b}.  
More generally, for a solvable Lie group $G$, the structure of the cotangent groupoid of $G$,
is closely related to the fine structure of the subquotients of $C^*(G)$, as seen in \cite{BB18b}. 

This new perspective  leads to some progress  both in the above recovery problem 
and the following conjecture. 


\begin{conjecture}[{\cite[Conj. 4.18]{RaRo88}}]
\label{SQ-prel}
\normalfont
If $G$ is an exponential solvable Lie group 
(e.g., a 1-connected nilpotent Lie group)
and $\Jc_1\subseteq\Jc_2$ are closed two-sided ideals of $C^*(G)$ whose quotient $\Jc_2/\Jc_1$ is a $C^*$-algebra with continuous trace, then $\Jc_2/\Jc_1$ is Morita-equivalent to a commutative $C^*$-algebra. 
\end{conjecture}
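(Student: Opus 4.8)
The plan is to reduce the statement to the vanishing of a Dixmier--Douady class and then to trivialize that class by means of the orbit method. Recall that a continuous-trace $C^*$-algebra $A$ is Morita-equivalent to the commutative algebra $C_0(\widehat A)$ if and only if its Dixmier--Douady invariant $\delta(A)\in\check H^3(\widehat A;\ZZ)$ vanishes: when $\delta(A)=0$ the algebra $A$ is isomorphic to the $C_0$-sections of the compacts $\Kc(\Hc)$ of an honest (locally trivial) Hilbert bundle $\Hc\to\widehat A$, and the section module $\Gamma_0(\widehat A,\Hc)$ is then a $C_0(\widehat A)$--$A$ imprimitivity bimodule. Writing $A:=\Jc_2/\Jc_1$, the whole problem therefore reduces to producing such a genuine Hilbert bundle realizing the continuous field of irreducible representations that constitutes $A$.

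Here the exponential solvable structure is the essential input. By the Kirillov--Bernat theory the unitary dual is identified, topologically, with the coadjoint orbit space, $\widehat G\cong\gg^*/\Ad^*(G)$, each orbit $\Oc_\ell$ contributing the monomial representation $\pi_\ell=\Ind_{\exp\hg_\ell}^{\,G}\chi_\ell$ induced from the unitary character $\chi_\ell$ attached to a polarization $\hg_\ell$ at $\ell$. A subquotient then corresponds to an $\Ad^*$-invariant locally closed set $\Omega\subseteq\gg^*$ with $\widehat A\cong\Omega/\Ad^*(G)$, and the continuous-trace hypothesis forces this orbit space to be Hausdorff. First I would prove that on such a Hausdorff locus the orbit map $q\colon\Omega\to\Omega/\Ad^*(G)$ is a locally trivial fibration whose fibres are the coadjoint orbits $G/G_\ell\cong\RR^{2d}$; since $G$ is exponential solvable the stabilizers $G_\ell$ are connected and simply connected, so the fibres are contractible and obstruction theory yields a global continuous cross-section $\sigma\colon\Omega/\Ad^*(G)\to\Omega$. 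Next, fixing a Jordan--H\"older flag of $\gg$, I would use the associated Vergne polarizations $\hg_{\sigma(x)}$, which depend continuously on $x$ with locally constant dimension along the fine strata, to assemble the induced spaces $\Hc_x=L^2\!\big(G/\exp\hg_{\sigma(x)}\big)$ into a locally trivial Hilbert bundle carrying the continuous field $x\mapsto\pi_{\sigma(x)}$; identifying $A$ with $\Gamma_0\big(\Omega/\Ad^*(G),\Kc(\Hc)\big)$ would then give $\delta(A)=0$ and the desired Morita equivalence to $C_0(\Omega/\Ad^*(G))$.

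The hard part is the globalization in the last two steps. Each $\pi_\ell$ is independent of the chosen polarization only up to unitary equivalence, so one must check that the intertwiners between the realizations attached to different strata can be chosen coherently, with no nontrivial Dixmier--Douady twist introduced along the boundaries where $\dim\hg_{\sigma(x)}$ jumps; equivalently, one must show that the ``quantization gerbe'' of the coadjoint-orbit fibration is trivial over the whole Hausdorff spectrum. This is exactly where the geometry of exponential solvable groups should be decisive: the contractibility of the orbits makes each $\Oc_\ell$ canonically prequantizable with a trivial prequantum bundle, so that $H^2(\Oc_\ell;\ZZ)=0$, while the connectedness and simple-connectedness of the stabilizers $G_\ell$ kill the Mackey obstructions that would otherwise contribute to $\check H^3$. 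Converting these fibrewise and isotropy-level trivialities into the global vanishing of $\delta(A)$ will require a careful analysis of the Pukanszky stratification of $\Omega$ and of the transition unitaries between Vergne realizations across strata, and it is this patching that I expect to be the principal obstacle.
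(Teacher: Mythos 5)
You should first be aware that the statement you were given is Conjecture~\ref{SQ-prel} (Raeburn--Rosenberg), which is \emph{open}: the paper does not prove it, and only verifies it for the special class of 1-connected nilpotent Lie groups with a 1-codimensional abelian closed subgroup (Theorem~\ref{more4}). Your proposal is a research program for the general exponential case, not a proof, and you say so yourself: the ``globalization'' you defer to the end is precisely the content of the conjecture. The only complete step in your outline is the standard reduction: a continuous-trace $C^*$-algebra $A$ is Morita-equivalent to $\Cc_0(\widehat{A})$ if and only if its Dixmier--Douady class $\delta(A)\in\check{H}^3(\widehat{A};\ZZ)$ vanishes. That is how the conjecture is usually phrased, so nothing has been gained; everything after it is unsupported. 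Concretely: (1)~the claim that on the Hausdorff locus the orbit map $q\colon\Omega\to\Omega/G$ is a locally trivial fibration is unproved and problematic --- the coadjoint action is not free, stabilizers and orbit dimensions vary, and obstruction theory cannot produce your global cross-section $\sigma$ until local triviality is in hand, which is itself a serious issue (Green's results of this type require free actions, which is exactly how the paper uses them); (2)~Vergne polarizations depend continuously on the linear functional only along fine strata, so $x\mapsto L^2\bigl(G/\exp\hg_{\sigma(x)}\bigr)$ is not a continuous field across strata without the coherent intertwiners you postpone; (3)~the heuristic that contractibility of orbits and simple connectedness of stabilizers ``kill'' $\delta(A)$ has no mechanism behind it --- $\delta(A)$ is a global invariant of the patching, fibrewise trivializations do not imply its vanishing, and if they did the conjecture would have been settled in the 1980s.

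For contrast, the route the paper actually takes for its special case is elementary and avoids all bundle-theoretic globalization. For $\Ac=C^*(G)$ it forms the extension $0\to\Ac_1\to\Ac\to\Ac_2\to0$ separating the $2$-dimensional from the $0$-dimensional coadjoint orbits, proves via Proposition~\ref{more2} (limit sets of sequences of $2$-dimensional orbits have no isolated points in $[\gg,\gg]^\perp$) that this is a generalized controlled boundary extension, so by Lemma~\ref{SQ5bis} and Theorem~\ref{SQ7bis} every continuous-trace subquotient splits as a direct sum along the two pieces; then $\Ac_2\simeq\Cc_0([\gg,\gg]^\perp)$ is commutative, while $\Ac_1\simeq\Cc_0(\ag^*\setminus\ag_0^*)\rtimes\RR$ is the $C^*$-algebra of a \emph{free} transformation group, to which Green's theorems apply directly to give $\Subquot^{\Tr}(\Ac_1)=\Subquot^{\Tr}_0(\Ac_1)$. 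If you want to salvage your approach, the realistic move is to aim it at restricted classes where the stratification degenerates to finitely many well-separated pieces, as here, rather than at the general exponential solvable case.
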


The above Conjecture~\ref{SQ-prel} was solved in the affirmative in only a few special cases: 
\begin{itemize}
	\item when $G$ is a two-step nilpotent Lie group, cf. \cite[Thm. 3.4]{LiRo96}; 
	\item when $G$ is a three-step nilpotent Lie group with one-dimensional center and generic flat coadjoint orbits, cf. \cite[Cor. 6.9]{BBL17}; 
	\item when $G$ belongs to a certain countable family of nilpotent Lie groups whose all coadjoint orbits are flat, cf. \cite[Cor. 6.14]{BBL17}.
\end{itemize}
Theorem~\ref{more4} below shows that, again for a nilpotent Lie algebra $\gg$ with a 1-codimensional abelian subalgebra, its corresponding Lie group $G$ can be added to the above list.

The fine structure of the  $C^*$ algebra of a group is central in solving the $C^*$-rigidity problem.
We refer to \cite{BB21} and \cite{BB25} for progress obtained so far on 
 the open problem of recovering a nilpotent Lie group from its corresponding $C^*$-algebra. 
We show below that, in the case of a nilpotent Lie algebra $\gg$ with a 1-codimensional abelian subalgebra, at least $\dim\gg$ and $\dim[\gg,\gg]$ can be recovered from $C^*(G)$ (Proposition~\ref{rec}). 

\subsection*{General notation and terminology} 
We denote  $\TT:=\{z\in\CC\mid \vert z\vert=1\}$ and  $\RR^\times:=\RR\setminus\{0\}$.
Throughout this paper, `1-connected' means connected and simply connected. 
Every 1-connected Lie group is denoted by an upper case Roman letter, 
and its Lie algebra is denoted by the corresponding lower case Gothic letter.

\section{Preliminaries}

\subsection*{Subquotients of $C^*$-algebras}
We recall some notation and auxiliary facts from \cite[Sect. 6]{BBL17}.

\begin{notation}
	\normalfont
	For every $C^*$-algebra $\Xc$ we denote by $[\Xc]$ its $*$-isomorphism class, and the closed two-sided ideals of $\Xc$ are called simply ideals. 
	The unitary dual space $\widehat{\Xc}$ is the set of all unitary equivalence classes of irreducible $*$-representations of $\Xc$, 
	and $\widehat{\Xc}$ is regarded as a topological space with the Fell topology. 
	
	For any $C^*$-algebra $\Ac$ let $\Subquot(\Ac)$ be the set of all $*$-isomorphism classes of subquotients of $\Ac$, 
	that is
	$$\Subquot(\Ac):=\{[\Jc_2/\Jc_1]\mid \text{$\Jc_1\subseteq\Jc_2$ are ideals of $\Ac$}\}.$$
	Let 
	\begin{align*}
	\Subquot^{\Hausd}(\Ac):=
	&\{[\Jc_2/\Jc_1]\in\Subquot(\Ac)\mid 
	\widehat{\Jc_2/\Jc_1} \text{ is a Hausdorff topological space}\}, \\
	\Subquot^{\Tr}(\Ac):=
	&\{[\Jc_2/\Jc_1]\in\Subquot(\Ac)\mid 
	\Jc_2/\Jc_1 \text{ is a continuous-trace $C^*$-algebra}\}.
	\end{align*}
	We also denote by  $\Subquot^{\Tr}_0(\Ac)$ the subset of $\Subquot^{\Tr}(\Ac)$ corresponding to the subquotients that are Morita-equivalent to commutative $C^*$-algebras. 
	Thus
	$$\Subquot^{\Tr}_0(\Ac)\subseteq\Subquot^{\Tr}(\Ac)\subseteq\Subquot^{\Hausd}(\Ac)\subseteq\Subquot(\Ac).$$
\end{notation}

With the above notation, Conjecture~\ref{SQ-prel} is equivalently stated as follows: 

\begin{conjecture}\label{SQ0}
	\normalfont
	If $\Ac$ is the $C^*$-algebra of an arbitrary 
	exponential Lie group, then $\Subquot^{\Tr}_0(\Ac)=\Subquot^{\Tr}(\Ac)$. 
\end{conjecture}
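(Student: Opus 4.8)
Since the inclusion $\Subquot^{\Tr}_0(\Ac)\subseteq\Subquot^{\Tr}(\Ac)$ holds by definition, the entire content of the equality is the reverse inclusion: \emph{every} continuous-trace subquotient $\Jc_2/\Jc_1$ of $\Ac=C^*(G)$ must be shown to be Morita-equivalent to a commutative $C^*$-algebra. The plan is to recast this through Dixmier--Douady theory and then to compute the relevant invariant in a groupoid model. A continuous-trace $C^*$-algebra $B$ has locally compact Hausdorff spectrum $\widehat B$ and a well-defined Dixmier--Douady class $\delta(B)\in H^3(\widehat B;\ZZ)$, and $B$ is Morita-equivalent to the commutative algebra $C_0(\widehat B)$ if and only if $\delta(B)=0$. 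Thus the statement is equivalent to proving that $\delta(\Jc_2/\Jc_1)=0$ for every $[\Jc_2/\Jc_1]\in\Subquot^{\Tr}(\Ac)$, and it is this vanishing — not any formal rephrasing — that must be established.

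First I would pass to the cotangent-groupoid description of $\Ac$ recalled in the Introduction, writing the relevant groupoid as $\Gc:=\gg^*\rtimes G\tto\gg^*$ with the coadjoint action. Under this picture the ideals of $\Ac$ correspond to the open coadjoint-invariant subsets of $\gg^*$, so a subquotient is realized as $C^*(\Gc|_S)$ for a locally closed invariant \emph{layer} $S\subseteq\gg^*$. The continuous-trace hypothesis forces the coadjoint orbits meeting $S$ to have constant dimension and the orbit space $S/G$ to be Hausdorff; the Mackey/groupoid analysis then presents $\Jc_2/\Jc_1\cong C^*(\Gc|_S)$ as a bundle of elementary $C^*$-algebras (the compacts $\Kc(\Hc_\xi)$ on the representation spaces $\Hc_\xi$ attached to the orbits) over the base $S/G$, with structure group $\mathrm{PU}(\Hc_\xi)$. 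Its Dixmier--Douady class $\delta\in H^3(S/G;\ZZ)$ is precisely the obstruction to lifting the structure group to $\mathrm{U}(\Hc_\xi)$, equivalently to selecting a continuous field of rank-one projections.

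Next I would exploit the exponential hypothesis to trivialize the \emph{fibrewise} part of this obstruction. For exponential solvable $G$ each orbit's representation $\Hc_\xi$ is induced, via Kirillov--Bernat theory, from the unitary character $\exp X\mapsto\ee^{\ie\,\xi(X)}$ of a polarizing subgroup, and the polarizations and stabilizers $G_\xi$ are connected and simply connected. Because these representations arise from genuine characters of simply connected groups, the associated Mackey/Busby--Smith obstruction cocycle on each fibre is a coboundary, so the twist is trivial fibre-by-fibre. What survives is only the \emph{global} gluing class $\delta\in H^3(S/G;\ZZ)$, measuring whether the fibrewise trivializations can be assembled coherently over all of $S/G$.

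It then remains to prove that this global class vanishes, and this is the hard part. For the conclusion one wants a continuous global choice of the rank-one projection field over $S/G$, which I would construct from the global coordinate structure of exponential coadjoint orbits — Pukanszky's canonical cross-section stratification — trivializing the bundle layer by layer and thereby forcing $\delta(\Jc_2/\Jc_1)=0$. The obstacle is that in full generality the orbit space $S/G$ of a layer need not be contractible and $H^3(S/G;\ZZ)$ need not vanish a priori, so one cannot argue by cohomological triviality alone: an explicit global section must be exhibited. This is exactly why the conjecture is established only in structurally transparent situations — the two-step nilpotent and flat-orbit cases recalled above, and the nilpotent Lie algebra with a $1$-codimensional abelian subalgebra treated in Theorem~\ref{more4} — where the cross-section and the bundle of elementary algebras can be written down concretely and the Dixmier--Douady class is computed to be zero.
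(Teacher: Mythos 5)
You should first note what the target statement is: Conjecture~\ref{SQ0} is precisely that, a \emph{conjecture} --- the paper's equivalent restatement of the Raeburn--Rosenberg conjecture \cite[Conj. 4.18]{RaRo88} --- and the paper contains no proof of it, only a verification in a special case (Theorem~\ref{more4}). Your proposal, to its credit, does not pretend otherwise: it is a strategy outline in which you yourself flag that the decisive step is missing. To name the gap concretely: after reducing the statement to the vanishing of the Dixmier--Douady class $\delta(\Jc_2/\Jc_1)\in H^3(\widehat{\Jc_2/\Jc_1};\ZZ)$ for every continuous-trace subquotient, and after the (essentially standard) fibrewise trivialization coming from Kirillov--Bernat induction from characters of simply connected polarizing subgroups, the global vanishing of $\delta$ over the orbit space of a layer is exactly the open content of the conjecture, and your appeal to ``Pukanszky's canonical cross-section stratification'' does not produce the required continuous global field of rank-one projections: a stratified family of local cross-sections gives trivializations stratum by stratum but no coherent gluing, which is the whole problem. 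There is also a smaller unjustified step earlier: the continuous-trace hypothesis alone does not force the orbits meeting a locally closed invariant layer $S$ to have constant dimension; in the paper, establishing even the weaker clopen splitting of $\widehat{\Tc}$ along the orbit-dimension decomposition requires the controlled-boundary condition of Lemma~\ref{SQ5bis} (no isolated limit points in the lower layer, verified via Proposition~\ref{more2}), not just Hausdorffness of the spectrum.

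It is also worth contrasting your plan with how the paper actually proves its partial result, since the two routes are genuinely different. For $G$ nilpotent with a $1$-codimensional abelian subgroup, the paper never computes a Dixmier--Douady class. Instead it decomposes $\widehat{G}=X_1\sqcup X_2$ by orbit dimension, uses Proposition~\ref{more2} and Lemma~\ref{SQ5bis} to show that any locally closed Hausdorff $S\subseteq\widehat{G}$ meets $X_1$ and $X_2$ in relatively clopen sets, so any continuous-trace subquotient splits as $\Tc=\Tc_1\dotplus\Tc_2$ (Theorem~\ref{SQ7bis}); the quotient piece is commutative outright, and on the $2$-dimensional-orbit piece the $\RR$-action on $\ag^*\setminus\ag_0^*$ is free, so Green's theorems \cite[Thms. 14 and 17]{Gr77} yield Morita equivalence with a commutative $C^*$-algebra directly, with no cocycle or bundle-gluing analysis. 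If you want to salvage your write-up, recast it as what it is --- a survey of the obstruction-theoretic reformulation, with the vanishing of $\delta$ explicitly left open --- or else restrict to the class of Theorem~\ref{more4} and replace the cross-section argument by the extension-plus-free-action mechanism above, which is what actually closes the argument there.
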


We now briefly describe the well-known parametrization of $\Subquot(\Ac)$ in terms of locally closed subsets of the dual space $\widehat{\Ac}$ of any $C^*$-algebra $\Ac$. 
First recall that a \textit{locally closed} subset of a topological space is a 
set that is the intersection of an open and a closed subset. 
In particular, the open subsets and the closed subsets are locally closed. 
For any topological space $T$ we denote by $\LC(T)$ the set of all locally closed subsets of $T$.

	For any closed two-sided ideal $\Jc$ of $\Ac$ we can regard $\widehat{\Ac/\Jc}$ and $\widehat{\Jc}$ as subsets of~$\widehat{A}$ via 
	$$\begin{aligned}
	\widehat{\Ac/\Jc}\simeq \widehat{\Ac}_{\Jc}&:=\{[\pi]\in\widehat{\Ac}\mid \Jc\subseteq\Ker\pi\}\subseteq \widehat{A}, \\
	\widehat{\Jc}\simeq \widehat{\Ac}^{\Jc}&:=\{[\pi]\in\widehat{\Ac}\mid \Jc\not\subset\Ker\pi\} \subseteq \widehat{A}. 
	\end{aligned} $$
Moreover $\Jc\mapsto \widehat{\Jc}$ is an increasing bijection between the 
		closed two-sided ideals of $\Ac$ and the open subsets of $\widehat{\Ac}$, 
		while $\Jc\mapsto \widehat{\Ac/\Jc}$ is a decreasing bijection between the 
		closed two-sided ideals of $\Ac$ and the closed subsets of $\widehat{\Ac}$.
(See for instance \cite[Props. 2.11.2, 3.2.2]{Dix64}.)

More generally, a subquotient of $\Ac$ is any $C^*$-algebra of the form $\Jc_2/\Jc_1$, 
	where $\Jc_1\subseteq\Jc_2$ are any closed two-sided ideals of $\Ac$. 
Then 
	$D:=\widehat{\Jc_1}$ is an open subset of $\widehat{\Ac}$ 
	and $\widehat{\Jc_2/\Jc_1}$ is a closed subset of the open set $\widehat{\Jc_1}$, 
	hence it is easily checked that the disjoint union $F:=\widehat{\Jc_2/\Jc_1}\cup(\widehat{\Ac}\setminus\widehat{\Jc_1})$ is 
	a closed subset of $\widehat{\Ac}$ and $F\cap D=\widehat{\Jc_2/\Jc_1}$ is locally closed. 
	The $*$-isomorphism class of the $C^*$-algebra $\Jc_2/\Jc_1$
	depends only on the locally closed set $\widehat{\Jc}_2\setminus \widehat{\Jc}_1$ 
	and the map 
	\begin{equation}\label{Psi_eq}
	\Psi_\Ac\colon \Subquot(A)\to\LC(\widehat{\Ac}),\quad [\Jc_2/\Jc_1]\mapsto \widehat{\Jc_2/\Jc_1}
	\end{equation}
	is a well-defined bijection. 
	See \cite[Lemma~7.3.5]{Ph87} and \cite[Rem. 2.4]{BBL17} for more details.
	
\subsection*{Polarizations and subordinated subalgebras} 
Let $\gg$ be any finite-dimensional real Lie algebra. 
We denote by $\Gr_{\alg}(\gg)$ the set of all subalgebras of $\gg$. 
	For every $\xi\in\gg^*$ we define 
	$$\begin{aligned}
	\Sbd(\xi)
	&:=\{\hg\in \Gr_{\alg}(\gg)\mid [\hg,\hg]\subseteq\Ker\xi\}, \\
	\Pg(\xi)
	&:=\{\hg\in \Gr_{\alg}(\gg)\mid \hg \text{ maximal element of }\Sbd(\xi)\} \\
	\end{aligned}$$
	hence $\Sbd(\xi)$ is the set of all subordinated subalgebras and $\Pg(\xi)$ is the set of all polarizations at~$\xi$. 
	
\begin{remark}\label{grcomp}
	\normalfont
	Let $\Vc$ be any finite-dimensional real vector space and 
	denote by $\Gr(\Vc)$ the compact Grassmann manifold that consists of all the linear subspaces of $\Vc$. 
	If $\lim\limits_{j\to\infty}\Wc_j=\Wc$ in $\Gr(\Vc)$, 
	then for every $w\in\Wc$ there exist $w_j\in\Wc_j$ for all $j\in\NN$ with 
	$\lim\limits_{j\to\infty}w_j=w$ in $\Vc$. 
	This can be seen by using a local chart of the smooth manifold $\Gr(\Vc)$. 
\end{remark} 

\begin{remark}\label{gralg}
	\normalfont
	As proved in \cite[1.11.9]{Dix74}, $\Gr_{\alg}(\gg)$ is a Zariski-closed subset of 
	the Grassmann manifold $\Gr(\gg)$. 
	In particular, $\Gr_{\alg}(\gg)$ is a compact topological space with 
	its topology inherited as a subset of $\Gr(\gg)$. 
\end{remark}

\begin{lemma}\label{acc_item2}
	Assume $\lim\limits_{j\to\infty}\xi_j=\xi$ in $\gg^*$ and select arbitrarily 
		$\pg_j\in\Sbd(\xi_j)$.  
		Then for any cluster point $\pg$ of the sequence $\{\pg_j\}_{j\in\NN}$ in $\Gr(\gg)$, 
		we have $\pg\in\Sbd(\xi)$. 
\end{lemma}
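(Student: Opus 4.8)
The plan is to realize the cluster point $\pg$ along a convergent subsequence and then to verify the defining condition $[\pg,\pg]\subseteq\Ker\xi$ by a continuity argument. First I would pass to a subsequence $\{\pg_{j_k}\}_{k\in\NN}$ with $\lim_{k\to\infty}\pg_{j_k}=\pg$ in $\Gr(\gg)$, which exists because $\pg$ is by assumption a cluster point of $\{\pg_j\}_{j\in\NN}$. Since each $\pg_{j_k}$ lies in $\Gr_{\alg}(\gg)$ and this set is closed in $\Gr(\gg)$ by Remark~\ref{gralg}, the limit $\pg$ is again a subalgebra, so $\pg\in\Gr_{\alg}(\gg)$. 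It then remains only to show that $\xi([x,y])=0$ for all $x,y\in\pg$.

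To this end, fix $x,y\in\pg$. Using Remark~\ref{grcomp} applied to the convergence $\lim_{k\to\infty}\pg_{j_k}=\pg$, I would choose vectors $x_{j_k},y_{j_k}\in\pg_{j_k}$ with $\lim_{k\to\infty}x_{j_k}=x$ and $\lim_{k\to\infty}y_{j_k}=y$ in $\gg$. By hypothesis $\pg_{j_k}\in\Sbd(\xi_{j_k})$, hence $[\pg_{j_k},\pg_{j_k}]\subseteq\Ker\xi_{j_k}$, and in particular $\xi_{j_k}([x_{j_k},y_{j_k}])=0$ for every $k\in\NN$. Now I would pass to the limit: the Lie bracket is continuous, so $[x_{j_k},y_{j_k}]\to[x,y]$, and since $\xi_{j_k}\to\xi$ (being a subsequence of the convergent sequence $\{\xi_j\}_{j\in\NN}$), the joint continuity of the evaluation pairing $(\eta,v)\mapsto\eta(v)$ on $\gg^*\times\gg$ yields $\xi_{j_k}([x_{j_k},y_{j_k}])\to\xi([x,y])$. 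As the left-hand side is identically zero, I conclude $\xi([x,y])=0$. Since $x,y\in\pg$ were arbitrary, $[\pg,\pg]\subseteq\Ker\xi$, that is, $\pg\in\Sbd(\xi)$.

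I do not expect a genuine obstacle, since both ingredients are already isolated as Remarks~\ref{gralg} and~\ref{grcomp}; the only point requiring mild care is the simultaneous lifting of the two fixed vectors $x$ and $y$ along one and the same subsequence, which is legitimate because Remark~\ref{grcomp} supplies approximating vectors for every element of the limit subspace along the whole (sub)sequence, so the two lifts can be taken over the common index set $\{j_k\}$. The argument is robust in that it invokes neither nilpotency of $\gg$ nor the maximality built into polarizations; it applies verbatim to arbitrary subordinated subalgebras, which is precisely what the statement asserts.
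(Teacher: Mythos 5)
Your proposal is correct and follows essentially the same route as the paper's proof: pass to a convergent subsequence, invoke Remark~\ref{gralg} to get that the limit is a subalgebra, lift $x,y$ along the subsequence via Remark~\ref{grcomp}, and conclude $\xi([x,y])=0$ by joint continuity of the pairing and the bracket. No gaps.
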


\begin{proof}
	By selecting a suitable subsequence 
	we may assume $\pg=\lim\limits_{j\to\infty}\pg_j$ in $\Gr(\gg)$. 
	Then $\pg\in\Gr_{\alg}(\gg)$ by Remark~\ref{gralg}. 
	Moreover, for all $x,y\in\pg$ there exist $x_j,y_j\in\pg_j$ for all $j\in\NN$ with 
	$\lim\limits_{j\to\infty}x_{j}=x$ and $\lim\limits_{j\to\infty}y_{j}=y$ 
	(by Remark~\ref{grcomp}). 
	Therefore 
	$$\dual{\xi}{[x,y]}=\lim\limits_{j\to\infty}\dual{\xi_{j}}{[x_j,y_j]}=0$$
	and thus $\pg\in\Sbd(\xi)$. 
\end{proof}

\section{Subquotients and generalized controlled boundary extensions}

\subsection*{Extensions of $C^*$-algebras and continuous-trace subquotients}

Here is a more general version of \cite[Lemma 6.5]{BBL17}.

\begin{lemma}\label{SQ5bis}
	Let $X$ be a first-countable topological space with $X= X_1 \sqcup X_2$, where 
	the following condition is satisfied: 
	\begin{itemize}
		 \item If $\overline{x}=\{x_j\}_{j\in \NN}$ is a convergent sequence contained in $X_1$, 
		with  
		its set of limit points $L(\overline{x})$, 
		then the set $L(\overline{x})\cap X_2$ has no isolated points. 
	\end{itemize}
	Then for every locally closed and Hausdorff subset $S$ of $X$ the set $S\cap X_1$ is relatively closed in $S$. 
\end{lemma}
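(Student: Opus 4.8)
\emph{The plan is to argue by contradiction}, reducing relative closedness to a sequential statement and then playing off the local closedness of $S$, the no-isolated-points hypothesis, and the Hausdorff property of $S$ against one another. First I would observe that, $X$ being first countable, so is its subspace $S$; hence it suffices to prove that $S\cap X_1$ is \emph{sequentially} closed in $S$. Suppose it is not. Then there is a sequence $\overline{x}=\{x_j\}_{j\in\NN}\subseteq S\cap X_1$ converging to some $s\in S$ with $s\notin X_1$. Since $X=X_1\sqcup X_2$, necessarily $s\in X_2$, so $s\in L(\overline{x})\cap X_2$ (as $x_j\to s$). Because $\overline{x}$ is a convergent sequence contained in $X_1$, the standing hypothesis applies and shows that $L(\overline{x})\cap X_2$ has no isolated points; in particular $s$ is not isolated in it, and, using first countability again, I would extract a sequence $\{s_k\}_{k\in\NN}$ in $L(\overline{x})\cap X_2$ with $s_k\neq s$ and $s_k\to s$.

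Next I would use local closedness to pull the points $s_k$ back into $S$. Write $S=U\cap C$ with $U$ open and $C$ closed in $X$. Each $s_k\in L(\overline{x})$ is a limit of $\overline{x}$ (along the whole sequence, or along a subsequence), and since $\overline{x}\subseteq S\subseteq C$ with $C$ closed, this forces $s_k\in C$ for every $k$. As $s\in S\subseteq U$, with $U$ open and $s_k\to s$, we also have $s_k\in U$ for all large $k$, whence $s_k\in U\cap C=S$ for such $k$. Fixing one such index $k$, the sequence $\overline{x}$ (or a suitable subsequence) then converges in the subspace $S$ to the two \emph{distinct} points $s$ and $s_k$, contradicting the assumption that $S$ is Hausdorff. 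This contradiction establishes that $S\cap X_1$ is sequentially, hence relatively, closed in $S$.

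The main obstacle I anticipate is conceptual rather than computational: the ambient space $X$ is not assumed Hausdorff, so a convergent sequence may have several limits, and the whole argument hinges on being able to relocate the ``extra'' limit points $s_k$ inside $S$ \emph{before} invoking the Hausdorffness of $S$. The clean mechanism for this is precisely the local closedness of $S$, which supplies $s_k\in C$ for free (as limits of a sequence already lying in $C$) and $s_k\in U$ eventually (as $U$ is an open neighbourhood of $s$). A secondary point to nail down is the exact meaning of $L(\overline{x})$, but the proof only uses that each of its elements is a (sub)sequential limit of $\overline{x}$, which is all that the last step requires.
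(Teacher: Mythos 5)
Your proof is correct and follows essentially the same route as the paper's: argue by contradiction, use first countability to produce a convergent sequence in $S\cap X_1$ with a limit in $S\cap X_2$, invoke the no-isolated-points hypothesis to obtain a second limit point, place it in $S$ via the closed and open parts of the locally closed decomposition, and contradict the Hausdorff property of $S$. The only cosmetic difference is that the paper picks the second point $y'$ directly inside the open set $D$ as a witness of non-isolation, while you extract a sequence $s_k\to s$ first; this changes nothing of substance.
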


\begin{proof}
	Let $y\in S\cap \overline{S\cap X_1}$. 
	We must prove that $y \in S\cap X_1$. 
	Assume the contrary, that is, $y \not \in S\cap X_1$,  
	hence $y\in S\cap X_2$. 
	Since $y \in   \overline{S\cap X_1}$ and  $X$ is a first-countable topological space, there exists a convergent sequence $\overline{x}=\{x_j\}_{j\in \NN}$ 
	contained in $S\cap X_1$, with  $y\in L(\overline{x})$, hence $y\in L(\overline{x})\cap S\cap X_2$. 
	
	The set $S$ is locally closed, hence there are $D\subset X$ open and $F\subset X$ closed with $S=F\cap D$. 
	Since $y\in S\subset D$, we obtain $y\in (L(\overline{x})\cap X_2)\cap D$. 
	By hypothesis, the set $L(\overline{x})\cap X_2$ has no isolated points, 
	hence there is $y'\in (L(\overline{x})\cap X_2)\cap D$ with $y'\ne y$, 
	and thus 
	$$\{y,y'\}\subseteq (L(\overline{x})\cap X_2)\cap D.$$ 
	On the other hand $\overline{x}\subseteq S\subseteq F$, where we recall that $F$ is closed, 
	and then $L(\overline{x})\subseteq F$. 
	We thus obtain that $\{y, y'\}\subseteq L(\overline{x})\cap D\cap F=L(\overline{x})\cap S$, 
	which is a contradiction with the assumption that $S$ has the Hausdorff property. 
	
	Consequently we must have $y\in S\cap X_1$, and this completes the proof.
\end{proof}

\begin{example}\label{ex12}
	\normalfont
	Here are two examples of nilpotent Lie groups $G$, 
	whose decompositions of $X:=\widehat{G}$ according to the dimensions of coadjoint orbits satisfy the condition of Lemma~\ref{SQ5bis}. 
	For the second of these examples, the hypothesis of \cite[Lemma 6.5]{BBL17} is \emph{not} satisfied. 
	\begin{enumerate}[(i)]
		\item\label{ex12_item1} Let $G$ be the $(2n+1)$-dimensional Heisenberg group. 
		If $X_1$ is its set of $2n$-dimensional coadjoint orbits and $X_2$ is its set of $0$-dimensional coadjoint orbits, 
		then $\widehat{G}=X_1\sqcup X_2$ is a decomposition as above, as it is well known. 
		\item\label{ex12_item2} Let $G$ be the $m$-dimensional thread-like group with $m\ge 3$. 
		If $X_1$ is its set of $2$-dimensional coadjoint orbits and $X_2$ is its set of $0$-dimensional coadjoint orbits, 
		then $\widehat{G}=X_1\sqcup X_2$ is again a decomposition as above. 
		In fact, 
		by the proof of \cite[Cor. 3.4]{ALS07}, if $\overline{x}=\{x_j\}_{j\in \NN}$ is a convergent sequence contained in $X_1$, 
		then either $L(\overline{x})\subseteq X_1$,  
		or for every $\chi\in L(\overline{x})\cap X_2$ there exists an affine line $A\subseteq X_1\simeq\gg/[\gg,\gg]$ with 
		$\chi\in A\subseteq L(\overline{x})\cap X_2$, hence in this case the set $L(\overline{x})\cap X_2$ has no isolated points. 
	\end{enumerate}
\end{example}

\begin{definition}\label{SQ6bis} 
	\normalfont
	The short exact sequence of separable $C^*$-algebras 
	$$
	0\to\Ac_1\to\Ac\mathop{\to}\limits^p\Ac_2\to0
	$$
	is called a \emph{generalized controlled boundary extension} 
	if,  denoting $X=\widehat{\Ac}$, $X_1=\widehat{\Ac_1}$, $X_2=\widehat{\Ac_2}$, the topological space
	$X= X_1\sqcup X_2$ satisfies the conditions in Lemma~\ref{SQ5bis}. 
\end{definition}

Now we obtain the following theorem by the method of proof of \cite[Thm. 6.7]{BBL17}, 
using the above Lemma~\ref{SQ5bis} instead of \cite[Lemma 6.5]{BBL17}. 

\begin{theorem}\label{SQ7bis}
	Let 
	$0\to\Ac_1 \to\Ac\mathop{\to}\limits^p\Ac_2\to0$
	be a generalized controlled boundary extension, 
	and fix any subset $\Qc\subseteq\Subquot(\Ac)$ which is hereditary, 
	in the sense that 
	if $[\Tc],[\Tc_1],[\Tc_2]\in\Subquot(\Ac)$ with $\Tc = \Tc_1 \dotplus \Tc_2$ 
	and $[\Tc]\in\Qc$, then $[\Tc_1],[\Tc_2]\in\Qc$. 
	Then the following assertions are equivalent: 
	\begin{enumerate}[{\rm(i)}] 
		\item\label{SQ7bis_item1} $\Subquot^{\Tr}(\Ac)\cap\Qc=\Subquot^{\Tr}_0(\Ac)\cap\Qc$
		\item\label{SQ7bis_item2} $\Subquot^{\Tr}(\Ac_1)\cap\Qc=\Subquot^{\Tr}_0(\Ac_1)\cap\Qc$ and $\Subquot^{\Tr}(\Ac_2)\cap\Qc=\Subquot^{\Tr}_0(\Ac_2)\cap\Qc$.
	\end{enumerate}
	If in addition $\Ac$ is liminary, then $\Subquot^{\Tr}(\Ac)\cap\Qc=\Subquot^{\Hausd}(\Ac)\cap\Qc$ if and only if 
	$\Subquot^{\Tr}(\Ac_1)\cap\Qc=\Subquot^{\Hausd}(\Ac_1)\cap\Qc$ and $\Subquot^{\Tr}(\Ac_2)\cap\Qc=\Subquot^{\Hausd}(\Ac_2)\cap\Qc$. 
\end{theorem}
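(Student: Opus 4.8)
The plan is to transport everything to the dual space $X=\widehat{\Ac}$ through the bijection $\Psi_\Ac$ of \eqref{Psi_eq}, exploiting that the extension gives $X=X_1\sqcup X_2$ with $X_1=\widehat{\Ac_1}$ open and $X_2=\widehat{\Ac_2}$ closed. First I would settle the implication \ref{SQ7bis_item1}$\Rightarrow$\ref{SQ7bis_item2}, which is the soft one. Since $\Ac_1$ is an ideal and $\Ac_2$ a quotient of $\Ac$, the standard compatibility of $\Psi$ with ideals and quotients identifies $\Subquot(\Ac_1)$ and $\Subquot(\Ac_2)$ with the members of $\Subquot(\Ac)$ whose locally closed support lies in $X_1$, respectively in $X_2$. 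Because having continuous trace and being Morita-equivalent to a commutative $C^*$-algebra are intrinsic to the $*$-isomorphism class of a subquotient, and because $\Qc$ is a fixed ambient set, both equalities in \ref{SQ7bis_item2} follow at once by restriction.

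The core is the reverse implication \ref{SQ7bis_item2}$\Rightarrow$\ref{SQ7bis_item1}, and I expect the one genuinely delicate point to be the passage from the Hausdorff property of a continuous-trace subquotient to a clopen splitting of its spectrum; this is exactly the work done by Lemma~\ref{SQ5bis}, whose hypotheses are in force by Definition~\ref{SQ6bis}. Given $[\Tc]\in\Subquot^{\Tr}(\Ac)\cap\Qc$, its image $S:=\Psi_\Ac([\Tc])$ is a locally closed subset of $X$ which is moreover Hausdorff, since $\widehat{\Tc}\simeq S$ and $\Tc$ has continuous trace. As $X_1$ is open, $S\cap X_1$ is relatively open in $S$; by Lemma~\ref{SQ5bis} it is relatively closed in $S$ as well, hence clopen, and so is $S\cap X_2=S\setminus(S\cap X_1)$. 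A clopen partition of the spectrum yields a direct-sum decomposition of the algebra, so $\Tc=\Tc_1\dotplus\Tc_2$, where $\Tc_1$ is the subquotient carried by $S\cap X_1\subseteq X_1$ and $\Tc_2$ the one carried by $S\cap X_2\subseteq X_2$. Being locally closed and contained in the open set $X_1$, respectively the closed set $X_2$, these pieces realize $[\Tc_1]\in\Subquot(\Ac_1)$ and $[\Tc_2]\in\Subquot(\Ac_2)$.

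From here the argument is bookkeeping. Because $[\Tc]\in\Qc$ and $\Tc=\Tc_1\dotplus\Tc_2$, heredity of $\Qc$ gives $[\Tc_1],[\Tc_2]\in\Qc$; since a direct summand of a continuous-trace algebra again has continuous trace, we get $[\Tc_1]\in\Subquot^{\Tr}(\Ac_1)\cap\Qc$ and $[\Tc_2]\in\Subquot^{\Tr}(\Ac_2)\cap\Qc$. Hypothesis \ref{SQ7bis_item2} then makes each $\Tc_i$ Morita-equivalent to a commutative $C^*$-algebra $C_0(Y_i)$, whence $\Tc\simeq\Tc_1\oplus\Tc_2$ is Morita-equivalent to $C_0(Y_1\sqcup Y_2)$; thus $[\Tc]\in\Subquot^{\Tr}_0(\Ac)\cap\Qc$, as required.

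For the liminary addendum I would run the identical scheme with $\Subquot^{\Hausd}$ in place of $\Subquot^{\Tr}_0$. The decomposition step is unchanged, because Lemma~\ref{SQ5bis} only asks that $S$ be locally closed and Hausdorff, which is precisely the defining property of a member of $\Subquot^{\Hausd}(\Ac)$; hence a subquotient with Hausdorff dual again splits as $\Tc=\Tc_1\dotplus\Tc_2$ with the two summands having Hausdorff duals and lying over $\Ac_1$ and $\Ac_2$. The liminary hypothesis, which passes to the ideal $\Ac_1$ and the quotient $\Ac_2$, keeps the reduction inside the liminary world in which, following \cite[Thm. 6.7]{BBL17}, the Hausdorff and continuous-trace conditions on these pieces are matched; the conclusion for $\Tc$ then assembles from the summands exactly as above, a direct sum of continuous-trace algebras being of continuous trace.
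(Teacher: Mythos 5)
Your proposal is correct and follows essentially the same route as the paper: both directions hinge on Lemma~\ref{SQ5bis} giving the clopen splitting $S=(S\cap X_1)\sqcup(S\cap X_2)$ of the spectrum of a continuous-trace (resp.\ Hausdorff) subquotient, the resulting direct-sum decomposition $\Tc=\Tc_1\dotplus\Tc_2$, heredity of $\Qc$, and the identification of subquotients of $\Ac_1$, $\Ac_2$ with subquotients of $\Ac$ (the paper makes this last identification explicit via the injectivity of $\Psi_\Ac$ and \cite[Lemma 6.3]{BBL17}, which you invoke more informally but equivalently).
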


\begin{proof}
	The assertion $\eqref{SQ7bis_item1}\Rightarrow\eqref{SQ7bis_item2}$  follows from \cite[Lemma 6.3]{BBL17}.
	
	For $\eqref{SQ7bis_item2}\Rightarrow\eqref{SQ7bis_item1}$, let $[\Tc] \in\Subquot^{\Tr} (\Ac) \cap\Qc$. 
	Then 
	$S= \widehat{\Tc}$ is a locally closed and Hausdorff subset of $X$. 
	Since $\Tc$ has continuous trace it follows from \cite[Cor.~10.5.6]{Dix64} that $\Tc$ is a 
	$C_0(S)$-algebra. 
	
	On the other hand, by Lemma~\ref{SQ5bis}, the sets $S\cap X_1$ and $S\cap X_2$ are both open and closed in $S$. 
	Thus we can write 
	\begin{equation}\label{SQ7-1}
	\Tc = \Tc_1 \dotplus \Tc_2
	\end{equation} 
	where $\Tc_j$ are ideals in $\Tc$, have continuous trace,  and $\widehat{\Tc_j} = S\cap X_j$, $j=1, 2$. 
	
	It follows by the hereditary hypothesis on $\Qc$ that $[\Tc_1],[\Tc_2]\in\Qc$. 
	We also have that $[\Tc_j]\in\Subquot^{\Tr} (\Ac)$: 
	Indeed, since $[\Tc] \in\Subquot^{\Tr} (\Ac)$ there are ideals, $\Ic\subseteq \Jc\subseteq \Ac$ with
	$\Tc = \Jc/\Ic$. 
	Let $p\colon \Jc \to \Jc/\Ic$ be the canonical projection. 
	Then $\Ic \subseteq p^{-1}(\Tc_j)$, $j=1, 2$ are ideals in $\Jc$, hence in $\Ac$, and 
	$p^{-1}(\Tc_j)/\Ic\simeq \Tc_j$. 
	
	For $j= 1, 2$, the sets  $S \cap X_j$ are locally closed in $X_j$, 
	hence there are $ [\Tc_j ']\in  \Subquot^{\Tr}(\Ac_j)= \Subquot^{\Tr}_0(\Ac_j)$, with $\widehat{\Tc_j '} = S\cap X_j$. 
	On the other hand, by \cite[Lemma 6.3]{BBL17}, $[\Tc_j ']\in\Subquot^{\Tr} (\Ac)$ for $j=1, 2$.  
	Since the mapping \eqref{Psi_eq} is injective, we then obtain $\Tc_j'\simeq \Tc_j$, hence $[\Tc_j]\in \Subquot_0^{\Tr} (\Ac) $, $j=1, 2$. 
	It then follows from \eqref{SQ7-1} that $[\Tc]\in \Subquot_0^{\Tr} (\Ac)$. 
	
The proof of the second assertion is completely similar, using again  \cite[Cor.~10.5.6]{Dix64}.
\end{proof}

\subsection*{Limit points of sequences of coadjoint orbits}

\begin{proposition}\label{more1}
	Let $\gg$ be a nilpotent Lie algebra, $\xi\in\gg^*$, 
	and $\{\xi_j\}_{j\in\NN}$ be a sequence in $\gg^*$ with $\lim\limits_{j\to\infty}\xi_j=\xi$. 
	For $j\in\NN$ let $\pg_j\in\Pg(\xi_j)$ such that 
	$\hg:=\lim\limits_{j\to\infty}\pg_j$ exists in $\Gr(\gg)$. 
	Define $P\colon\gg^*\to\hg^*$, $\eta\mapsto\eta\vert_{\hg}$, 
	and  
	$$A_\xi:=\Ad_G^*(G)(P^{-1}(P(\xi))/G\subseteq\gg^*/G.$$ 
	Then $\hg\in\Sg(\xi)$ and  
  $$\Oc_{\xi}\in A_\xi\subseteq L(\{\Oc_{\xi_j}\}_{j\in\NN}).$$
	Moreover, the set $A_\xi$ is pathwise connected. 
\end{proposition}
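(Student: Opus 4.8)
The plan is to establish separately the four assertions $\hg\in\Sg(\xi)$, $\Oc_\xi\in A_\xi$, the inclusion $A_\xi\subseteq L(\{\Oc_{\xi_j}\}_{j\in\NN})$, and the pathwise connectedness of $A_\xi$; all but the third are essentially formal. First, since every polarization is subordinate, $\pg_j\in\Pg(\xi_j)\subseteq\Sbd(\xi_j)$, and $\hg=\lim_{j\to\infty}\pg_j$ is in particular a cluster point of $\{\pg_j\}_{j\in\NN}$, so $\hg\in\Sbd(\xi)=\Sg(\xi)$ follows at once from Lemma~\ref{acc_item2}. Writing $\hg^\perp\subseteq\gg^*$ for the annihilator of $\hg$, one has $P^{-1}(P(\xi))=\xi+\hg^\perp$, the affine subspace of functionals agreeing with $\xi$ on $\hg$; since $\xi\in\xi+\hg^\perp$ this gives $\Oc_\xi=q(\xi)\in A_\xi$, where $q\colon\gg^*\to\gg^*/G$ is the continuous open quotient map. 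As saturating a set under $G$ leaves its image under $q$ unchanged, I also record the convenient identity $A_\xi=q(\xi+\hg^\perp)$.

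For the inclusion $A_\xi\subseteq L(\{\Oc_{\xi_j}\})$, the decisive ingredient is the classical fact that a polarization of a nilpotent Lie algebra automatically satisfies the Puk\'anszky condition, so that $\xi_j+\pg_j^\perp=\Ad_G^*(\exp\pg_j)\xi_j\subseteq\Oc_{\xi_j}$ for every $j$. Because $\pg_j\to\hg$ in $\Gr(\gg)$, the dimensions $\dim\pg_j$ are eventually constant and the annihilators satisfy $\pg_j^\perp\to\hg^\perp$ in $\Gr(\gg^*)$. I would then fix an orbit $\Oc_\eta\in A_\xi$ with $\eta=\xi+w$ and $w\in\hg^\perp$, apply Remark~\ref{grcomp} with $\Vc=\gg^*$ to produce $w_j\in\pg_j^\perp$ with $w_j\to w$, and set $\eta_j:=\xi_j+w_j\in\xi_j+\pg_j^\perp\subseteq\Oc_{\xi_j}$. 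Then $\eta_j\to\eta$ in $\gg^*$, and continuity of $q$ yields $\Oc_{\xi_j}=q(\eta_j)\to q(\eta)=\Oc_\eta$, so that $\Oc_\eta\in L(\{\Oc_{\xi_j}\})$, which is exactly the required inclusion.

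The pathwise connectedness of $A_\xi$ is then immediate from the identity $A_\xi=q(\xi+\hg^\perp)$: the affine subspace $\xi+\hg^\perp$ is convex, hence pathwise connected, and its image under the continuous map $q$ inherits that property.

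The step I expect to be the main obstacle is the inclusion. The orbit space $\gg^*/G$ is typically non-Hausdorff, so the single sequence $\{\Oc_{\xi_j}\}$ may converge to many distinct orbits simultaneously, and the substance of the claim is that it converges to \emph{every} orbit of $A_\xi$. Making this rigorous requires producing, for each target $\eta\in\xi+\hg^\perp$, approximants $\eta_j$ that genuinely lie on the orbits $\Oc_{\xi_j}$ and not merely in the affine sets $\xi_j+\pg_j^\perp$; this is precisely where the Puk\'anszky condition is indispensable, as it upgrades the elementary containment $\Ad_G^*(\exp\pg_j)\xi_j\subseteq\xi_j+\pg_j^\perp$ to the reverse inclusion needed to place $\eta_j$ inside $\Oc_{\xi_j}$. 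A minor technical point to verify is the stabilization of $\dim\pg_j$, guaranteeing that the annihilator map is continuous on a single component of $\Gr(\gg)$; this is automatic from convergence in the Grassmannian.
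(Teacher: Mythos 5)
Your proof is correct, but you establish the key inclusion $A_\xi\subseteq L(\{\Oc_{\xi_j}\}_{j\in\NN})$ by a genuinely different and more elementary route than the paper does. The paper argues through representation theory: it decomposes $\Ind_H^G(\chi)$ as a direct integral over $A_\xi$ (Corwin--Greenleaf--Gr\'elaud, Lipsman), invokes Fell's theorem to convert this into a weak equivalence with the family $\{\pi_{\Oc}\mid\Oc\in A_\xi\}$, uses the continuity of induction to show that $\Ind_H^G(\chi)$ is weakly contained in $\{\Ind_{P_j}^G(\chi_j)\}_{j\in\NN}$, and finally transfers the resulting weak containment back to $\gg^*/G$ via the Kirillov homeomorphism. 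You instead work entirely on the orbit side: the Puk\'anszky condition, which is automatic for polarizations of nilpotent Lie algebras and should be cited (e.g., from Corwin--Greenleaf), gives $\xi_j+\pg_j^\perp\subseteq\Oc_{\xi_j}$, and Remark~\ref{grcomp} applied to $\pg_j^\perp\to\hg^\perp$ in $\Gr(\gg^*)$ produces, for each $\eta\in\xi+\hg^\perp$, approximants $\eta_j\in\Oc_{\xi_j}$ with $\eta_j\to\eta$, whence $\Oc_{\xi_j}\to\Oc_\eta$ by continuity of the quotient map. Your argument buys a self-contained, essentially linear-algebraic and topological proof, provided $L(\{\Oc_{\xi_j}\}_{j\in\NN})$ is read in the quotient topology of $\gg^*/G$; if instead one insists on the topology pulled back from the Fell topology on $\widehat{G}$ --- which is what ultimately matters for the applications later in the paper --- you still need the fact that the Kirillov correspondence is a homeomorphism to identify the two topologies, which is precisely the ingredient the paper's proof makes explicit. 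The remaining assertions ($\hg\in\Sg(\xi)$ via Lemma~\ref{acc_item2}, the trivial membership $\Oc_\xi\in A_\xi$, and pathwise connectedness as the continuous image of the convex set $\xi+\hg^\perp$) are handled in essentially the same way in both proofs.
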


\begin{proof}
	One has $\hg\in\Sg(\xi)$ by Lemma~\ref{acc_item2}. 
	
	Let us denote $\chi_j:=\exp(\ie\xi_j), \chi:=\exp(\ie\xi)\colon \gg\to\TT$ for every $j\in\NN$. 
	We have 
	$$\Ind_H^G(\chi)={\int\limits_{A_\xi}}^\oplus\pi_{\Oc}^{\oplus n_{\xi,\Oc}}\de\mu(\Oc)$$
	where $\de\mu(\cdot)$ is a measure on $A_\xi$ whose support is equal to $A_\xi$, and $1\le n_{\xi,\Oc}\le\infty$ is a suitable multiplicity 
	and $\pi_{\Oc}$ is a unitary irreducible representation associated by the Kirillov correspondence to any $\Oc\in \gg^*/G$ 
	(see \cite[Thm. on page 552, Thm. 3]{CGG87} and \cite[Thm. 1.1]{Li89}). 
	Then, by \cite[Thm. 1.7]{Fe60}, the representation $\Ind_H^G(\chi)$ of $G$ is weakly equivalent 
	to the family of unitary irreducible representations $\{\pi_{\Oc}\mid \Oc\in A_\xi\}$. 
	
	On the other hand, since $\lim\limits_{j\to\infty}\xi_j=\xi$ in $\gg^*$ 
	and 
	$\hg:=\lim\limits_{j\to\infty}\pg_j$ in $\Gr(\gg)$, 
	it follows that the representation $\Ind_H^G(\chi)$ is weakly contained 
	in the family of irreducible representations $\{\Ind_{P_j}^G(\chi_j)\mid j\in\NN\}$. 
	This can be obtained either by the continuity of induction \cite[Thm. 4.2]{Fe64} or the method of proving that the Kirillov correspondence is continuous,  
	see for instance the proof of \cite[Thm. 8.2]{Ki62}. 
	
	Thus $\{\pi_{\Oc}\mid \Oc\in A_\xi\}$ is weakly contained in $\{\Ind_{P_j}^G(\chi_j)\mid j\in\NN\}$ 
	and then, using the fact that the Kirillov correspondence is a homeomorphism, 
	we obtain $A_\xi\subseteq L(\{\Oc_{\xi_j}\}_{j\in\NN})$, as claimed. 
	
	In order to prove that the set $A_\xi$ is pathwise connected, 
	let $q\colon \gg^*\to\gg^*/G$, $\eta\mapsto\Oc_\eta$, be the canonical quotient map. 
	Since $P\colon\gg^*\to\hg^*$ is a linear map, it follows that for arbitrary 
	$\zeta\in\hg^*$ the set $P^{-1}(\zeta)$ is convex, hence is pathwise connected,  
	and then so is the set $G\times P^{-1}(\zeta)$. 
	Hence the composition of continuous maps 
	$$G\times P^{-1}(\zeta)\mathop{\longrightarrow}\limits^{\Ad^*_G} 
	\Ad_G^*(G)(P^{-1}(\zeta))\mathop{\longrightarrow}\limits^{q} 
	\Ad_G^*(G)(P^{-1}(\zeta))/G$$
	has its image pathwise connected. 
	For $\zeta:=P(\xi)\in\hg^*$ we obtain that $A_\xi$ is pathwise connected, and this completes the proof. 
\end{proof}

\begin{proposition}\label{more2}
	Let $\gg$ be a nilpotent Lie algebra 
	and $\{\Oc_j\}_{j\in\NN}$ be a sequence in~$\gg^*/G$
	with $\dim\Oc_j=2$ for every $j\in\NN$.  
	Then the set $L(\{\Oc_j\}_{j\in\NN})\cap[\gg,\gg]^\perp$ has no isolated points.  
\end{proposition}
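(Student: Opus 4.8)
The plan is to deduce this from Proposition~\ref{more1}. Recall first that $\eta\in[\gg,\gg]^\perp$ holds precisely when $\eta$ is fixed by the coadjoint action, so that $\Oc_\eta=\{\eta\}$ is $0$-dimensional; thus $[\gg,\gg]^\perp$ is exactly the fixed-point set of the coadjoint action inside $\gg^*$. Fix an arbitrary $\chi\in L(\{\Oc_j\}_{j\in\NN})\cap[\gg,\gg]^\perp$; I must produce points of $L(\{\Oc_j\}_{j\in\NN})\cap[\gg,\gg]^\perp$ that are distinct from $\chi$ but arbitrarily close to it. Since the canonical quotient map $q\colon\gg^*\to\gg^*/G$ is open (being a quotient by a group action) and $\gg^*$ is metrizable, the relation $\Oc_\chi\in L(\{\Oc_j\})$ lets me, after passing to a subsequence, choose $\xi_j\in\Oc_j$ with $\lim_{j\to\infty}\xi_j=\chi$ in $\gg^*$; this is a routine diagonal argument over a countable neighbourhood basis of $\chi$.

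For each $j$ pick a polarization $\pg_j\in\Pg(\xi_j)$. Because $\dim\Oc_j=2$, each $\pg_j$ has codimension one in $\gg$; and since $\Gr_{\alg}(\gg)$ is compact (Remark~\ref{gralg}) I pass to a further subsequence so that $\hg:=\lim_{j\to\infty}\pg_j$ exists in $\Gr(\gg)$, whence $\hg$ is a subalgebra with $\dim\hg=\dim\gg-1$. The structural observation I will use is that a codimension-one subalgebra of a nilpotent Lie algebra is automatically an ideal (its normalizer must be all of $\gg$), so that $\gg/\hg$ is one-dimensional and abelian, and hence $[\gg,\gg]\subseteq\hg$. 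Dualizing, the line $\hg^\perp$ is contained in $[\gg,\gg]^\perp$; fix $\beta\ne0$ spanning $\hg^\perp$.

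Now I apply Proposition~\ref{more1} to $\xi=\chi$ and the sequence $\{\xi_j\}$ with the chosen polarizations. With $P(\eta)=\eta\vert_\hg$ we have $P^{-1}(P(\chi))=\chi+\hg^\perp$, and the proposition yields $A_\chi\subseteq L(\{\Oc_{\xi_j}\})\subseteq L(\{\Oc_j\})$, where $A_\chi=\Ad_G^*(G)(\chi+\hg^\perp)/G$. Because $\chi+\hg^\perp\subseteq[\gg,\gg]^\perp$, every point $\chi+t\beta$ of this line is a coadjoint fixed point, so $\Oc_{\chi+t\beta}=\{\chi+t\beta\}$ and $\{\Oc_{\chi+t\beta}\mid t\in\RR\}\subseteq A_\chi$. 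These are pairwise distinct $0$-dimensional orbits lying in $[\gg,\gg]^\perp$, and by continuity of $q$ one has $\Oc_{\chi+t\beta}\to\Oc_\chi$ as $t\to0$. Hence every neighbourhood of $\chi$ meets $L(\{\Oc_j\})\cap[\gg,\gg]^\perp$ in a point other than $\chi$, which is exactly the assertion that $\chi$ is not isolated.

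I expect the main obstacle to be concentrated in Proposition~\ref{more1}, whose proof encodes the continuity of induction and the fact that the Kirillov correspondence is a homeomorphism; this I am entitled to invoke. Beyond that, the two points needing care are the openness-of-quotient diagonal argument that converts the limit point $\chi$ into points $\xi_j\in\Oc_j$ converging to it, and the verification that the limiting subalgebra $\hg$ has codimension one and therefore contains $[\gg,\gg]$. This last fact is the real crux: it is what forces $\hg^\perp$ to be a whole line of coadjoint fixed points through $\chi$, thereby upgrading Proposition~\ref{more1} into the desired no-isolated-points statement.
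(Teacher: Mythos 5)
Your proposal is correct and follows essentially the same route as the paper: extract representatives $\xi_j\in\Oc_j$ converging to the given fixed point, pass to a convergent subsequence of polarizations using the compactness of $\Gr_{\alg}(\gg)$, observe that the limit $\hg$ has codimension one so that $[\gg,\gg]\subseteq\hg$ and $\chi+\hg^\perp$ is a line of coadjoint fixed points, and then invoke Proposition~\ref{more1} to place that line inside the limit set. The only (welcome) additions are that you spell out why $[\gg,\gg]\subseteq\hg$ and why an arbitrary limit point can be represented by a convergent sequence of functionals, details the paper leaves implicit.
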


\begin{proof}
	By selecting a suitable subsequence of $\{\Oc_j\}_{j\in\NN}$, we may assume 
	that we have $\xi_j\in\Oc_j$ and $\xi\in[\gg,\gg]^\perp$ with $\lim\limits_{j\to\infty}\xi_j=\xi$. 
	Using that $\Gr_{\alg}(\gg)$ is compact (see Remark~\ref{gralg}), we may further select a suitable subsequence 
	and we may thus assume that 
	for every $j\in\NN$ one has $\pg_j\in\Pg(\xi_j)$ for which there exists 
	$\hg:=\lim\limits_{j\to\infty}\pg_j$ in $\Gr(\gg)$. 
	Since $2=\dim\Oc_j=2\dim(\gg/\pg_j)$, we obtain $\dim\pg_j=\dim\gg-1$, 
	hence also $\dim\hg=\dim\gg-1$. 
	Then $[\gg,\gg]\subseteq\hg$. 
	Therefore, defining $P\colon\gg^*\to\hg^*$, $\eta\mapsto\eta\vert_{\hg}$, 
	we obtain $P^{-1}(\xi\vert_{\hg})\subseteq[\gg,\gg]^\perp$. 
	This implies that $\Oc_\eta=\{\eta\}$ for every $\eta\in P^{-1}(P(\xi))=\xi+\hg^\perp$ 
	and then the canonical quotient map $q\colon\gg^*\to\gg^*/G$ defines a bijection 
	$$q\vert_{\xi+\hg^\perp}\colon \xi+\hg^\perp\to A_\xi$$ 
	where $A_\xi$ is the set defined in Proposition~\ref{more1}.  
	Since $\dim\hg^\perp=1$, it then follows by Proposition~\ref{more1} that $\Oc_\xi$ is not an isolated point 
	of $L(\{\Oc_j\}_{j\in\NN})\cap[\gg,\gg]^\perp$, and this completes the proof. 
\end{proof}

\begin{remark}\label{more3}
	\normalfont
	Proposition~\ref{more2} is a partial generalization of a property of thread-like groups established in~\cite[Cor. 3.4]{ALS07} 
	(see Example~\ref{ex12}\eqref{ex12_item2} above). 
\end{remark}

\section{Main results}

\subsection*{Continuous-trace subquotients for generalized $ax+b$-groups}  

The next theorem shows that Conjecture~\ref{SQ-prel} holds for nilpotent Lie groups 
of the form $G_D$. 

\begin{theorem}\label{more4}
	If $G$ is 
	a 1-connected nilpotent 
Lie group that has an abelian 
	closed subgroup of codimension~1, 
	then 
	its $C^*$-algebra $\Ac=C^*(G)$ has the property 
	$$\Subquot^{\Tr}(\Ac)
	=\Subquot^{\Tr}_0(\Ac)
	.$$ 
\end{theorem}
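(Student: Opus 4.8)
The plan is to stratify $\widehat{G}\cong\gg^*/G$ by the dimension of the coadjoint orbits and to feed the resulting two-term extension into Theorem~\ref{SQ7bis}. First I would record that every coadjoint orbit has dimension $0$ or $2$. Writing $\gg=\ag\oplus\RR D$, where $\ag$ is the given abelian subalgebra of codimension~$1$ (automatically an ideal, since a codimension-$1$ subalgebra of a nilpotent Lie algebra is an ideal), for each $\xi\in\gg^*$ the antisymmetric form $B_\xi(x,y):=\dual{\xi}{[x,y]}$ vanishes on $\ag\times\ag$, so $\ag$ is a totally isotropic subspace of codimension~$1$; hence $\rank B_\xi\le 2$, and as $\dim\Oc_\xi=\rank B_\xi$ is even we get $\dim\Oc_\xi\in\{0,2\}$. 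Accordingly I set $X:=\widehat{G}$, let $X_2$ be the closed set of $0$-dimensional orbits, which coincides with $[\gg,\gg]^\perp$ (whose points are fixed by the coadjoint action), and let $X_1:=X\setminus X_2$ be the open set of $2$-dimensional orbits. Let $\Ac_1\subseteq\Ac$ be the ideal with $\widehat{\Ac_1}=X_1$ and $\Ac_2:=\Ac/\Ac_1$, so that $\widehat{\Ac_2}=X_2$ and $0\to\Ac_1\to\Ac\to\Ac_2\to0$ is exact.

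Next I would check that this sequence is a generalized controlled boundary extension in the sense of Definition~\ref{SQ6bis}. Since $\gg^*/G$ is second countable, $X$ is first countable, and the defining condition of Lemma~\ref{SQ5bis} asserts precisely that for every convergent sequence of $2$-dimensional orbits the set of its limit points lying in $[\gg,\gg]^\perp$ has no isolated points. This is exactly the content of Proposition~\ref{more2}. Granting this, Theorem~\ref{SQ7bis} applied with $\Qc=\Subquot(\Ac)$ (trivially hereditary, as the summands in the hereditary condition are by hypothesis already in $\Qc$) reduces the desired equality $\Subquot^{\Tr}(\Ac)=\Subquot^{\Tr}_0(\Ac)$ to the two equalities $\Subquot^{\Tr}(\Ac_i)=\Subquot^{\Tr}_0(\Ac_i)$ for $i=1,2$.

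For $\Ac_2$ there is nothing to prove: the coadjoint action is trivial on $[\gg,\gg]^\perp$, so $\Ac_2\cong C_0([\gg,\gg]^\perp)$ is commutative and all of its subquotients are commutative. The real work is $\Ac_1$, and here I would pass to the groupoid model of the introduction, $C^*(G)\cong C^*(\Gc_D)\cong C_0(\Vc^*)\rtimes\RR$. Because $D$ is nilpotent, the one-parameter group $\ee^{tD^*}$ acts on the open invariant set $W:=\{\xi\in\Vc^*\mid D^*\xi\ne0\}$ of non-fixed points by a unipotent flow whose orbit maps $t\mapsto\ee^{tD^*}\xi$ are polynomial in $t$; a periodic polynomial curve is constant, so all stabilizers are trivial and the action on $W$ is \emph{free}. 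Under the groupoid isomorphism one has $\Ac_1\cong C_0(W)\rtimes\RR$, and any continuous-trace subquotient $\Tc$ of $\Ac_1$ is of the form $C_0(\widetilde S)\rtimes\RR$ for an invariant, locally closed subset $\widetilde S\subseteq W$ whose orbit space $\widetilde S/\RR=\widehat{\Tc}$ is Hausdorff (the latter because $\Tc$ has continuous trace).

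Finally I would invoke the structure theory of transformation-group $C^*$-algebras of free actions: since $\widetilde S$ is locally compact Hausdorff, a free $\RR$-action on it with Hausdorff orbit space is proper, and then the symmetric imprimitivity theorem (Green; see Raeburn--Williams) shows that $C_0(\widetilde S)\rtimes\RR$ is Morita-equivalent to the commutative algebra $C_0(\widetilde S/\RR)$. Hence $[\Tc]\in\Subquot^{\Tr}_0(\Ac_1)$, which yields $\Subquot^{\Tr}(\Ac_1)=\Subquot^{\Tr}_0(\Ac_1)$ and closes the argument. I expect this last step to be the main obstacle: one must identify each continuous-trace subquotient of $\Ac_1$ with a free transformation-group algebra and then convert the Hausdorffness of its spectrum into properness of the unipotent flow, so that the Dixmier--Douady obstruction vanishes and the subquotient becomes Morita-equivalent to a commutative $C^*$-algebra.
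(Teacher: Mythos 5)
Your proposal follows the paper's proof almost step for step: the same stratification $\widehat{G}=X_1\sqcup X_2$ by coadjoint orbit dimension, the same appeal to Proposition~\ref{more2} to see that $0\to\Ac_1\to\Ac\to\Ac_2\to0$ is a generalized controlled boundary extension, the same observation that $\Ac_2\simeq C_0([\gg,\gg]^\perp)$ is commutative, the same identification of $\Ac_1$ with the crossed product of $C_0$ of the set of non-fixed points by a free unipotent flow, and the same final appeal to Theorem~\ref{SQ7bis} with $\Qc=\Subquot(\Ac)$. The one genuine gap is in how you pass from a continuous-trace subquotient $\Tc\simeq C_0(\widetilde S)\rtimes\RR$ of $\Ac_1$ to properness of the action on $\widetilde S$.

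You assert that a free $\RR$-action on a locally compact Hausdorff space with Hausdorff orbit space is automatically proper. This is false in general: Green's paper \cite{Gr77} --- the very reference needed at this point --- contains the standard counterexample of a free action of $\RR$ on a second countable locally compact Hausdorff space whose orbit space is Hausdorff but which is not proper, and whose transformation-group $C^*$-algebra consequently fails to have continuous trace. So Hausdorffness of $\widehat{\Tc}=\widetilde S/\RR$ is strictly weaker than what you need, and you have discarded the stronger hypothesis that you actually have. The repair is exactly what the paper does: for a \emph{free} action, Green's theorems \cite[Thms.\ 14 and 17]{Gr77} say that $C_0(\widetilde S)\rtimes\RR$ has continuous trace \emph{if and only if} the action on $\widetilde S$ is proper, and that a free and proper action yields a crossed product Morita-equivalent to $C_0(\widetilde S/\RR)$. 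Properness must therefore be extracted from the continuous-trace hypothesis on $\Tc$ itself, not from the Hausdorffness of its spectrum (unless you separately prove that for the specific unipotent linear flow at hand Hausdorff orbit space forces properness, which you have not done). With that single step replaced, your argument coincides with the paper's.
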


\begin{proof}
	Since $G$ has an abelian (connected) closed subgroup of codimension~1, 
	it follows that
	the coadjoint orbits of $G$ have dimensions~$\le 2$.
	Let $X_1$ be the set of $2$-dimensional coadjoint orbits and $X_2$ be the set of $0$-dimensional coadjoint orbits, 
	hence $\widehat{G}=X_1\sqcup X_2$, where $X_1$ is an open subset of $\widehat{G}$. 
	Denote by $\Ac_1$ the ideal of $\Ac$ with $\widehat{\Ac_1}=X_1$. 
	Then the $C^*$-algebra $\Ac_2:=\Ac/\Ac_1$ satisfies $\widehat{\Ac_2}=X_2$ 
	and $0\to\Ac_1 \to\Ac\to\Ac_2\to0$
	is a generalized controlled boundary extension, by Proposition~\ref{more2}. 
	
	The $C^*$-algebra $\Ac_2$ is commutative, and in fact $\Ac_2\simeq\Cc_0([\gg,\gg]^\perp)$, hence clearly 
	$
	\Subquot^{\Tr}(\Ac_2)=\Subquot^{\Tr}_0(\Ac_2)
	$. 
	
	On the other hand, let $[\Tc]\in \Subquot^{\Tr}(\Ac_1)
	$. 
	  In particular, $[\Tc]\in \Subquot^{\Tr}(\Ac)$ by \cite[Lemma 6.3]{BBL17}. 
	
	Now let $A\subseteq G$ be the abelian connected closed subgroup of codimension~1. 
	Select $X_0\in\gg\setminus\ag$, so that we have the semidirect product of Lie algebras $\gg=\ag\rtimes\RR X_0$ with its corresponding semidirect product of groups $G=A\rtimes_\alpha \RR$, 
	where, using the group isomorphism $\exp_A\colon (\ag,+)\to A$, we have 
	$$\alpha\colon \RR\to\Aut(A),\quad \alpha_t:=\exp(t(\ad_\gg X_0)\vert_\ag).$$
	We then obtain  $*$-isomorphisms $C^*(G)\simeq C^*(A)\rtimes_\alpha \RR$ 
	and $C^*(A)\simeq\Cc_0(\ag^*)$ via the Fourier transform, 
	hence 
	$$C^*(G)\simeq\Cc_0(\ag^*)\rtimes_{\alpha^*}\RR$$
	where 
	$$\alpha^*\colon \RR\to\Aut(\Cc_0(\ag^*)),\quad 
	\alpha_t^*(\varphi):=\varphi\circ \alpha_t.$$
	Now let us consider the fixed-point set
	$$\ag_0^*:=\{\xi\in\ag^*\mid(\forall t\in\RR)\ \alpha_t^*(\xi)=\xi\}\subseteq\ag^*.$$
	Then $\ag_0^*$ is a linear subspace of $\ag^*$ that is clearly invariant under the action~$\alpha^*$. 
	In particular, its complement $\ag^*\setminus\ag_0^*$ is also $\alpha^*$-invariant and 
	we obtain the commutative diagram 
	$$\xymatrix{0 \ar[r] & \Ac_1\ar[r] \ar[d] & \Ac\ar[r] \ar[d] & \Ac_2\ar[r] \ar[d] & 0 \\
	0\ar[r] & \Cc_0(\ag^*\setminus\ag_0^*)\rtimes_{\alpha^*}\RR\ar[r] &  \Cc_0(\ag^*)\rtimes_{\alpha^*}\RR\ar[r] & \Cc_0(\ag^*_0)\rtimes_{\alpha^*}\RR \ar[r] & 0
	}$$
	whose rows are exact and whose columns are $*$-isomorphisms. 
	Moreover, since the operator $(\ad_\gg X_0)\vert_\ag\colon\ag\to\ag$ has no nonzero purely imaginary eigenvalue, 
	it is easily seen that the restricted action 
	$$\RR\times(\ag^*\setminus\ag_0^*)\to \ag^*\setminus\ag_0^*, 
	(t,\xi)\mapsto\alpha_t^*(\xi)$$
	is free, that is, for any $t\in\RR$ and $\xi\in\ag^*\setminus\ag_0^*$ we have $\alpha_t^*(\xi)=\xi$ if and only if $t=0$. 
	Therefore we obtain $\Subquot^{\Tr}(\Ac_1)=\Subquot^{\Tr}_0(\Ac_1)$ by \cite[Thms. 14 and 17]{Gr77}. 
	Finally, using Theorem~\ref{SQ7bis} for $\Qc:= \Subquot(\Ac)$, we obtain  $
	\Subquot^{\Tr}(\Ac)
	=\Subquot^{\Tr}_0(\Ac)
	$, 
	and this completes the proof. 
\end{proof}

\subsection*{Recovering the dimension from a Lie group $C^*$-algebra}

\begin{proposition}
\label{rec}
Let $G_j$ be a nilpotent Lie group with an abelian 
closed subgroup of codimension~1, for $j=1,2$. 
If $C^*(G_1)\simeq C^*(G_2)$, then $\dim\gg_1=\dim\gg_2$ 
and $\dim [\gg_1,\gg_1]=\dim[\gg_2,\gg_2]$. 
\end{proposition}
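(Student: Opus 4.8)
The plan is to extract $\dim\gg_j$ and $\dim[\gg_j,\gg_j]$ as \emph{topological invariants} of the spectrum $\widehat{G_j}$, which a $*$-isomorphism $C^*(G_1)\simeq C^*(G_2)$ turns into a homeomorphism $\Phi\colon\widehat{G_1}\to\widehat{G_2}$. As recalled in the proof of Theorem~\ref{more4}, for such a group $G$ all coadjoint orbits have dimension $0$ or $2$, so $\widehat{G}=X_1\sqcup X_2$, where $X_2$ is the set of characters (the $0$-dimensional orbits) and $X_1$ consists of the infinite-dimensional irreducible representations (the $2$-dimensional orbits). First I would note that this splitting is \emph{intrinsic} to the $C^*$-algebra: since $G$ is nilpotent, $\Ac:=C^*(G)$ is liminary (see \cite{Dix64}), so $\pi(\Ac)$ is the algebra of compact operators on the representation space for every $[\pi]\in\widehat{\Ac}$, and $[\pi]\in X_2$ precisely when $\pi(\Ac)\simeq\CC$. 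Equivalently, $X_2=\widehat{\Ac/\Ic}$, where $\Ic$ is the commutator ideal of $\Ac$ and $\Ac/\Ic\simeq C^*(G^{\mathrm{ab}})$ is the maximal commutative quotient; since ideals correspond bijectively to open subsets of $\widehat{\Ac}$, the commutator ideal $\Ic$ coincides with the ideal $\Ac_1$ of the proof of Theorem~\ref{more4}. Hence $\Phi$ restricts to homeomorphisms $X_1^{(1)}\to X_1^{(2)}$ and $X_2^{(1)}\to X_2^{(2)}$.

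Next I would compute the relevant local dimensions. On the one hand, $\Ac/\Ic\simeq C^*(G^{\mathrm{ab}})\simeq\Cc_0([\gg,\gg]^\perp)$, so $X_2\simeq[\gg,\gg]^\perp\simeq\RR^{\,n-m}$ with $n:=\dim\gg$ and $m:=\dim[\gg,\gg]$; thus $X_2$ is a manifold whose local dimension is $n-m$. On the other hand, as in the proof of Theorem~\ref{more4}, $\Ic\simeq\Cc_0(\ag^*\setminus\ag_0^*)\rtimes_{\alpha^*}\RR$ for a \emph{free} action of $\RR$ on the open set $\ag^*\setminus\ag_0^*\subseteq\ag^*$, whose orbits are $1$-dimensional. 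By \cite{Gr77} its spectrum $X_1$ is the orbit space $(\ag^*\setminus\ag_0^*)/\RR$, and the existence of local cross-sections for the free flow makes $X_1$ a (possibly non-Hausdorff) smooth manifold of dimension $\dim\ag-1=n-2$. Since the local Euclidean dimension of a manifold is preserved by homeomorphisms (invariance of domain), the restrictions of $\Phi$ yield $n_1-m_1=n_2-m_2$ and, provided $X_1\ne\emptyset$, also $n_1=n_2$.

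Finally I would dispose of the degenerate case and assemble the equalities. Note that $X_1=\emptyset$ exactly when $\Ic=0$, i.e.\ when $G$ is abelian; this is intrinsic and hence respected by $\Phi$, so either both groups are abelian or both are not. If both are abelian then $\widehat{G_j}\simeq\RR^{\,n_j}$ and invariance of domain already gives $n_1=n_2$ with $m_1=m_2=0$. Otherwise $X_1\ne\emptyset$ for both, the previous paragraph gives $n_1=n_2$, and combining this with $n_1-m_1=n_2-m_2$ yields $m_1=m_2$, as required. I expect the main technical point to be the computation $\dim X_1=n-2$: the orbit space need not be Hausdorff (as already happens for thread-like groups), so rather than invoking global dimension theory I would work locally, using the flow-box/local-slice description of the free $\RR$-action to exhibit $X_1$ as locally homeomorphic to $\RR^{\,n-2}$, after which invariance of domain does the rest.
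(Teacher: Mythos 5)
Your overall strategy---recovering $\dim\gg-\dim[\gg,\gg]$ from the commutative quotient and $\dim\gg$ from a local dimension of the noncommutative part of the spectrum---is sound and close in spirit to the paper's proof, which extracts exactly the same two integers from $C^*(G)/\Comm(C^*(G))\simeq\Cc_0(\RR^{k+1})$ and from the invariant $\ind C^*(G)$ of \cite[Thm.~4.11]{BBL17}. The first half of your argument (intrinsicness of the splitting $X_1\sqcup X_2$, the identification $X_2\simeq\RR^{n-m}$, the abelian case) is fine. The gap is exactly at the point you flagged: the claim that the free flow on $\ag^*\setminus\ag_0^*$ admits local slices, so that $X_1=(\ag^*\setminus\ag_0^*)/\RR$ is a (possibly non-Hausdorff) manifold of dimension $n-2$. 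The flow-box theorem produces local \emph{transversals}, but a transversal need not inject into the orbit space: an orbit can leave the flow box and return arbitrarily close to the base point after a large time, even though every orbit of a unipotent flow is closed and properly embedded. This already happens for the $5$-dimensional thread-like group. Concretely, let $\ag^*=\RR^4$ with the flow generated by the nilpotent Jordan block $N$ given by $Ne_1=0$ and $Ne_i=e_{i-1}$ for $i=2,3,4$, put $\xi_0=e_2$ and $\xi_j=e_2-\tfrac{6}{j}e_3+\tfrac{12}{j^2}e_4$. A direct computation gives
$$\ee^{jN}\xi_j=e_2+\tfrac{6}{j}e_3+\tfrac{12}{j^2}e_4.$$
The points $\xi_j$ and $\ee^{jN}\xi_j$ are distinct, lie on the same orbit, both lie on the natural transversal $\spa\{e_2,e_3,e_4\}$ at $\xi_0$ (the velocity at $\xi_0$ is $N\xi_0=e_1$), and both converge to $\xi_0$. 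Hence no neighbourhood of $\xi_0$ in the transversal injects into $X_1$: near $\xi_0$ the quotient map identifies pairs of transversal points accumulating at $\xi_0$ while remaining injective on another region also accumulating at $\xi_0$, so this construction yields no chart at $[\xi_0]$ and the assertion that $X_1$ is an $(n-2)$-manifold is unjustified (and, I believe, false at such non-generic orbits). Invariance of domain therefore cannot be applied as stated. The phenomenon is invisible in the examples you had in mind (Heisenberg, $4$-dimensional thread-like, or any $D$ with $D^2=0$), where local injectivity does hold.

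The argument can be repaired without changing its spirit. The generic $2$-dimensional coadjoint orbits form a dense open Hausdorff subset $\Omega\subseteq X_1$ which genuinely is an $(n-2)$-manifold, because on the corresponding open subset of $\ag^*$ the flow admits a global linear cross-section. If some nonempty open $U\subseteq X_1$ is homeomorphic to an open subset of $\RR^m$, then $U\cap\Omega\ne\emptyset$ is open in both an $m$-manifold and an $(n-2)$-manifold, whence $m=n-2$ by invariance of domain; applying this to the restriction of $\Phi$ to $X_1^{(1)}\to X_1^{(2)}$ gives $n_1=n_2$, and the rest of your argument then goes through. This is essentially the route the paper takes, except that it delegates the hard step to \cite[Thm.~4.11]{BBL17}, which shows that $\ind C^*(G)$ (equal to $\dim\gg$ minus the dimension of the generic coadjoint orbit) is a $*$-isomorphism invariant, and then combines it with the commutative quotient $C^*(G)/\Comm(C^*(G))\simeq\Cc_0(\RR^{k+1})$ exactly as you do for the second equality.
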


\begin{proof}
Without loss of generality, we may assume $G_j=\Vc_j\rtimes_{\alpha^{D_j}}\RR$, 
where $\Vc_j$ is a finite-dimensional real vector space and $0\ne D_j\in\End(\Vc_j)$ is a nilpotent operator for $j=1,2$. 
We also denote 
$$n_j:=\dim_\RR\Vc_j\text{ and }k_j:=\dim(\Ker D_j)$$
hence 
\begin{equation}
\label{rec_proof_eq1}
\dim\gg_j=n_j+1\text{ and }\dim[\gg_j,\gg_j]=n_j-k_j
\end{equation}
Since the generic coadjoint orbits of $G_j$ have dimension~2, 
we obtain $\ind G_j=n_j-1$ by \cite[Def. 4.7]{BBL17} 
and then, by \cite[Thm. 4.11]{BBL17}, 
$$\ind C^*(G_j)=n_j-1$$
while 
$$C^*(G_j)/\Comm(C^*(G_j))\simeq \Cc_0([\gg_j,\gg_j]^\perp)\simeq \Cc_0(\RR^{k_j+1}).$$
Now the hypothesis $C^*(G_1)\simeq C^*(G_2)$ implies $n_1=n_2$ and $k_1=k_2$, 
and then \eqref{rec_proof_eq1} implies the assertion. 
\end{proof}

Proposition~\ref{rec} above shows that, in the case of a nilpotent Lie algebra $\gg$ with a 1-codimensional abelian subalgebra, both $\dim\gg$ and $\dim[\gg,\gg]$ can be recovered from~$C^*(G)$.

This result can be regarded as a first step in the study of the $C^*$-rigidity property for this special class of nilpotent Lie groups. 
A further result in that direction is the Proposition~\ref{low}, which uses the notation~$G_D$ from \cite[\S 2]{BB18a}. 

\subsection*{$C^*$-rigidity for low-dimensional coadjoint orbits}

\begin{proposition}
\label{low}
Let $\Vc_0$ be a finite-dimensional real vector space, $D_0\in\End(\Vc_0)$ with  $(D_0)^2=0$, and $G_{D_0}:=\Vc_0\rtimes_{\alpha^{D_0}}\RR$. 
Then the nilpotent Lie group $G_{D_0}$ is $C^*$-rigid within the class of 1-connected nilpotent Lie groups with coadjoint orbits of dimension $\le2$. 
\end{proposition}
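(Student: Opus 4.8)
The plan is to recover from $C^*(G_{D_0})$ a short list of invariants that, within the stated class, determines $G_{D_0}$ up to isomorphism. Let $H$ be a $1$-connected nilpotent Lie group with coadjoint orbits of dimension $\le 2$ and $C^*(H)\simeq C^*(G_{D_0})$, and let $\hg$ be its Lie algebra. If $D_0=0$ then $C^*(G_{D_0})$ is commutative, so $H$ is abelian of the correct dimension and there is nothing to prove; thus assume $D_0\ne 0$, so that $C^*(G_{D_0})$ is noncommutative and the generic coadjoint orbits of both $H$ and $G_{D_0}$ have dimension exactly $2$. First I would recover $\dim\hg$ and $\dim[\hg,\hg]$ by the method of Proposition~\ref{rec}: since $\ind C^*(H)=\ind H=\dim\hg-2$ one reads off $\dim\hg$, and since $C^*(H)/\Comm(C^*(H))\simeq\Cc_0([\hg,\hg]^\perp)$ the topological dimension of the spectrum of this commutative quotient recovers $\dim[\hg,\hg]$. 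From $C^*(H)\simeq C^*(G_{D_0})$ this yields $\dim\hg=\dim\Vc_0+1$ and $\dim[\hg,\hg]=\rank D_0=:r$.

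These two numbers do not determine $H$, and pinning down the remaining data is the heart of the matter. Within the class there are genuinely different groups sharing them: the three-step group $G_4\times\RR$ (with $G_4$ the four-dimensional thread-like group) has the same $\dim\hg$ and $\dim[\hg,\hg]$, and even the same center dimension, as the five-dimensional $G_{D_0}$ with $r=2$; and adding one central dimension to the free two-step nilpotent Lie algebra on three generators produces a two-step group with the same $(\dim\hg,\dim[\hg,\hg])$ as a suitable $G_{D_0}$ but possessing no $1$-codimensional abelian ideal. To separate all of these I would, in addition, recover from $C^*(H)$ the nilpotency class of $H$ (in particular whether $[H,H]\subseteq Z(H)$, i.e. whether $H$ is two-step) and the dimension $\dim Z(H)$ of its center.

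Granting these, the structural part is short. For a two-step group in the class the bracket is encoded by a linear subspace $W$ of $\Lambda^2(\hg/\zg)^*$ all of whose elements have rank $\le 2$; by the classical description of such spaces, either $W$ has a common linear factor or $W$ is contained in $\Lambda^2$ of a three-dimensional space. In the first case $\hg$ has a $1$-codimensional abelian ideal, hence $H$ is of the form $G_D$; being two-step forces $D^2=0$, and the Jordan type of a nilpotent $D$ with $D^2=0$ is determined by $(\dim\hg,\dim[\hg,\hg])$, so $H\simeq G_{D_0}$. In the second case $\hg$ is the free two-step algebra on three generators plus an abelian summand, whose center is exactly one dimension larger than that of the $G_{D_0}$ with the same $(\dim\hg,\dim[\hg,\hg])$; the recovered value of $\dim Z(H)$ therefore excludes this alternative. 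Finally, since $G_{D_0}$ is two-step, the recovered nilpotency class forces $H$ to be two-step, excluding the higher-step groups such as $G_4\times\RR$. Combining these, $H\simeq G_{D_0}$.

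The main obstacle is precisely the recovery of the nilpotency class and of $\dim Z(H)$ from $C^*(H)$, since these are exactly the two-step/higher-step and center invariants not seen by the coarse data of Proposition~\ref{rec}. I expect both to be encoded in the fine topology of $\widehat H$, namely in the way sequences of two-dimensional coadjoint orbits degenerate onto the character variety $[\hg,\hg]^\perp$. Here I would use the limit-set analysis of Proposition~\ref{more1} and Proposition~\ref{more2}: the two-step case produces the ``flat'' one-parameter limit families $A_\xi$, whereas a thread-like factor produces the line-filling degenerations of Example~\ref{ex12}\eqref{ex12_item2}, and the dimension of the central characters attached to these limits should recover $\dim Z(H)$. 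Turning this topological distinction into an honest $C^*$-invariant---so that it is transported across the isomorphism $C^*(H)\simeq C^*(G_{D_0})$---is the crux, and for this I would build on the rigidity techniques of \cite{BB21} and \cite{BB25}.
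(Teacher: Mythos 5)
Your reduction to recovering, from $C^*(H)$, the pair $(\dim\hg,\dim[\hg,\hg])$ together with the nilpotency class and $\dim Z(H)$ is a reasonable strategy, and the first part (index theory for $\dim\hg$, the abelianization $C^*(H)/\Comm(C^*(H))\simeq\Cc_0([\hg,\hg]^\perp)$ for $\dim[\hg,\hg]$, and the Jordan-form observation that a square-zero nilpotent is determined up to similarity by its rank in fixed dimension) matches the paper. But there is a genuine gap exactly where you flag it: you never produce the $C^*$-invariant that separates the two-step groups $G_{D_0}$ from the higher-step competitors. Note that your own example $G_4\times\RR$ is itself of the form $G_D$ with $D^2\ne 0$ and shares $\dim\hg$, $\dim[\hg,\hg]$ and $\dim Z$ with the five-dimensional $G_{D_0}$ of rank $2$, so even granting recovery of $\dim Z(H)$ you would still need to recover the nilpotency class, and that is the hard part. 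Saying the distinction ``should be encoded in the fine topology of $\widehat{H}$'' and deferring to \cite{BB21}, \cite{BB25} is not a proof; essentially the entire content of the proposition lives in this step.

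The paper closes this gap with two concrete devices absent from your proposal. First, instead of your rank-$\le 2$ analysis of subspaces of $\Lambda^2$ and the center-dimension argument, it invokes the classification of \cite{ACL86}: a $1$-connected nilpotent Lie group with coadjoint orbits of dimension $\le 2$ is either some $G_D$, or $N_{6,15}$, or $N_{5,4}$, and the latter two are already $C^*$-rigid among all $1$-connected nilpotent Lie groups by \cite{BB21}, so they are excluded outright. Second, for $G=G_D$ with $\dim\Vc=\dim\Vc_0$ and $\rank D=\rank D_0$, the two-step versus higher-step dichotomy is detected by an honest $C^*$-invariant: the character set of $G_D$ is the spectrum of $C^*(G_D)/\Comm(C^*(G_D))$, hence is carried to the character set of $G_{D_0}$ by any $*$-isomorphism, and one asks whether its complement in the unitary dual is Hausdorff. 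If $D^2\ne 0$, then $G_D$ admits a thread-like quotient of dimension $>3$, whose dual embeds as a closed subset with characters going to characters, forcing the complement of the characters in $\widehat{G_D}$ to be non-Hausdorff; if $(D_0)^2=0$, then $G_{D_0}$ is two-step of class $\Tc$ in the sense of \cite[Def. 5.1]{BB21} and that complement is Hausdorff. This is precisely the ``turning the topological distinction into a $C^*$-invariant'' that your proposal identifies as the crux but leaves open.
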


\begin{proof}
Let $G$ be a 1-connected nilpotent Lie groups with coadjoint orbits of dimension $\le2$ 
with $C^*(G)\simeq C^*(G_{D_0})$. 
Since $G$ is nilpotent, the condition on coadjoint orbits  implies by \cite[Thm.]{ACL86} that one of the following cases occurs: 
\begin{enumerate}[1.]
	\item $G=G_D=\Vc\rtimes_{\alpha^D}\RR$ for a finite-dimensional real vector space $\Vc$ and a nilpotent operator $D_0\in\End(\Vc_0)$; 
	\item $G=N_{6,15}$, the free 2-step nilpotent Lie group with three generators, so we have $\dim N_{6,15}=6$; 
	\item  $G=N_{5,4}$, where $\dim N_{5,4}=5$. 
\end{enumerate}
Since the groups $N_{5,4}$ and $N_{6,15}$ are $C^*$-rigid within the class of all 1-connected nilpotent Lie groups by \cite[Thms. 1.3 and 6.6]{BB21}, it follows that the above cases 2 and 3 cannot actually occur. 

Thus, we may assume  $G=G_D=\Vc\rtimes_{\alpha^D}\RR$ as in the above case~1. 
Then, by Proposition~\ref{rec}, the assumption $C^*(G_D)=C^*(G)\simeq C^*(G_{D_0})$ 
implies $\dim\Vc=\dim\Vc_0$ and $\dim [\gg_D,\gg_D]=\dim[\gg_{D_0},\gg_{D_0}]$. 
The latter equality shows that 
\begin{equation}
\label{low_proof_eq1}
	\dim(\Ran D)=\dim(\Ran D_0)
\end{equation}
while the equality $\dim\Vc=\dim\Vc_0$ 
shows that we may assume $\Vc=\Vc_0$ without loss of generality. 

If we show that $D^2=0$, then the equality \eqref{low_proof_eq1} implies that the Jordan canonical forms of the nilpotent operators $D$ and $D_0$ coincide, 
hence these operators are similar, and this further implies that the Lie groups $G_{D}$ and $G_{D_0}$ are isomorphic. 

Assuming $D^2\ne0$, we obtain a direct sum decomposition into $D$-invariant linear subspaces $\Vc=\Vc_1\dotplus\Vc_2$ such that the restricted operator $D_2:=D\vert_{\Vc_2}\in\End(\Vc_2)$ has exactly one Jordan cell of size $>2$, 
hence the nilpotent Lie group $G_{D_2}:=\Vc_2\rtimes_{\alpha^{D_2}}\RR$ is a thread-like group  with $\dim G_{D_2}>3$, and then it is known that the complement of the set of characters in the unitary dual $\widehat{G_{D_2}}$ is not  Hausdorff. 
On the other hand, we have the short exact sequence of Lie groups 
\begin{equation*}
	\xymatrix{\Vc_1 \ar@{^{(}->}[r] & G_D \ar@{>>}[r] & G_{D_2}}
\end{equation*}
hence the Lie group isomorphism $G_D/\Vc_1\simeq G_{D_2}$, which leads to a topological embedding of $\widehat{G_{D_2}}$ as a closed subset of $\widehat{G_D}$, 
with the characters of $G_{D_2}$ embedded as characters of $G_D$. 
This implies that the complement of the set of characters in the unitary dual $\widehat{G_D}$ is not Hausdorff. 

 However, the group $G_{D_0}$ is 2-step nilpotent of class $\Tc$ in the sense of \cite[Def. 5.1]{BB21}. 
 Therefore, by the argument in the proof of \cite[Eq. (5.1)]{BB21}, 
 the complement of the set of characters in the unitary dual $\widehat{G_{D_0}}$ is Hausdorff. 
 Since the assumption $C^*(G_D)=C^*(G)\simeq C^*(G_{D_0})$ leads to a homeomorphism $\widehat{G_D}\simeq\widehat{G_{D_0}}$, we thus obtained a contradiction. 
 
 Consequently $D^2=0$, and this completes the proof. 
\end{proof}

\end{document}